\newtheorem{theorem}{Theorem}[section]
\newtheorem{lemma}{Lemma}[section]
\newtheorem{definition}{Definition}[section]
\newtheorem{remark}{Remark}[section]
\numberwithin{equation}{section}
\newenvironment{proof}[1][Proof]{\noindent\textbf{#1.} }{\hfill $\Box$}
\begin{document}
\title{\LARGE\bf{ Infinitely many solutions for a  Schr$\ddot{\text{o}}$dinger equation with  sign-changing potential and nonlinear term}
  }
\date{}
 \author{ Long-Jiang Gu$\thanks{Supported by NFSC under Grant No. 12101577 and  the Zhejiang Provincial Natural Science Foundation under Grant No. LQ21A010014 ,
Email: gulongjiang0@163.com}$    ,  \   \  Huan-Song Zhou$\thanks{Supported by NFSC under Grants No. 11931012, 11871387,
Email: hszhou@whut.edu.cn (Corresponding author)}$  \\
{\small $^{1}$ School of Mathematics and Physics, China University of Geosciences}\\
         {\small Wuhan 430074, China}\\
           {\small $^{2}$ Center for Mathematical Sciences and Department of Mathematics, Wuhan University of Technology}\\
          {\small Wuhan 430070, China}\\
                                } \maketitle
\begin{center}
\begin{minipage}{13cm}
\par
 \small  {\bf Abstract:}  We propose a new variational approach to finding multiple critical points for strongly indefinite problems without assuming the weak upper semicontinuity on the variational functionals.
By this approach, we obtain the existence of  infinitely many geometrically distinct solutions for a stationary periodic  Schr\text{$\ddot{\text{o}}$}dinger  equation, in which
the linear part is strongly indefinite and the nonlinear term is allowed to change sign in  general ways.

 \vskip2mm
 \par
 {\bf Keywords:} Variational method, Strongly indefinite, Elliptic equation, Multiple solutions, Periodic Schr$\ddot{\text{o}}$dinger equation, Sign-changing nonlinearity.

 \vskip2mm
 \par
 {\bf 2000 Mathematics Subject Classification.}  35J20, 35J61, 58E05.
\end{minipage}
\end{center}

 {\section{Introduction}}
 \setcounter{equation}{0}

 \par
 In this paper,  we are
concerned with the following well-studied stationary periodic Schr\text{$\ddot{\text{o}}$}dinger equation
\begin{equation}\label{5.equation}
      \left\{
   \begin{array}{ll}
     -\Delta u + V(x)u=f(x,u), \quad x=(x_1,x_2,...,x_N)\in \mathbb{R}^{N}, \\
     u\in H^{1}(\mathbb{R}^{N}) , N\geq2 ,
   \end{array}
   \right.
\end{equation}
where  the potential $V$ and the nonlinear term $f$ are sign-changing functions  satisfying
\begin{description}
  \item[$(\text{V})$]  $V \in C(\mathbb{R}^{N},\mathbb{R})$ is
 1-periodic in $x_{1},...,x_{N}$ and  0 lies in a gap of the spectrum of $-\Delta+V$, i.e.,
   $ \sup\{\sigma(-\Delta+V)\cap (-\infty,0)\}<0<\inf\{\sigma(-\Delta+V)\cap (0,+\infty)\}$.

  \item[$(\text{f}_{\text{1}})$] $f\in C(\mathbb{R}^N\times \mathbb{R})$ is 1-periodic in $x_{1},...,x_{N}$, and there exist constants $C>0$ and
         $p \in (2,2^*) $ with $2^*=
     \frac{2N}{N-2} \text{ if }  N\geq3, 
    \text{ and } 2^*= +\infty \text{ if }  N=2,
   $
   such that
   \begin{equation*}
    |f(x,t)|\leq C(1+|t|^{p-1}), \text{ for any } (x,t)\in \mathbb{R}^N\times \mathbb{R}.
   \end{equation*}
\item[$(\text{f}_{\text{2}})$]   $\lim \limits_{t\rightarrow0} {f(x,t)}/{t}=0$   uniformly in $x\in \mathbb{R}^N$.
\end{description}

The associated variational functional of equation (\ref{5.equation}) is defined by
\begin{equation}\label{5.energy1}
 \varphi(u):=\frac{1}{2}\int_{\mathbb{R}^{N}}  | \nabla u|^{2}+V(x) u ^{2} dx
    -\int_{\mathbb{R}^{N}} F(x,u)dx, \quad u \in H^{1}(\mathbb{R}^{N}),
\end{equation}
where $F(x,u):=\int_{0}^{u}f(x,s)ds$.
$\varphi (u)$ is well defined in $H^{1}(\mathbb{R}^{N})$
and is of class $\mathcal{C}^{1}$ under the conditions $(\textbf{V})$, $(\textbf{f}_{\textbf{1}})$ and $(\textbf{f}_{\textbf{2}})$.
Moreover,  for each $u\in H^{1}(\mathbb{R}^{N})$,
\begin{equation}\label{5.derivative1}
  \varphi'(u)\phi =\int_{\mathbb{R}^{N}}\nabla u \cdot \nabla\phi dx
+\int_{\mathbb{R}^{N}}V(x)u \phi dx-\int_{\mathbb{R}^{N}}f(x,u)\phi dx,\text{ for any } \phi\in H^{1}(\mathbb{R}^{N}),
\end{equation}
and $\varphi' (u)$ is weakly sequentially continuous(see \cite[Theorem A.2]{MW}).

We say $u\in H^{1}(\mathbb{R}^{N})\setminus\{0\}$ is a nontrivial weak solution of (\ref{5.equation}) if  $\varphi'(u)\phi=0$ for any $\phi\in H^{1}(\mathbb{R}^{N})$.
Two solutions of (\ref{5.equation}), $u_1$ and $u_2$,  are called {\em geometrically distinct} if $u_1$ and $u_2$ are distinct under $\mathbf{Z}^N-$translation, i.e.,  $u_1(x)\neq u_2(x+a)$  for any $a\in \mathbf{Z}^N$.

Under condition $(\textbf{V})$, the continuity and periodicity of  $V$ ensure that
the Schr\"{o}dinger operator $L:=-\Delta+V(x)$
has only continuous spectrum in $L^{2}(\mathbb{R}^{N})$  (see \cite[Theorem XIII.100]{Reed-Simon}). Since 0 lies in a gap of the spectrum of $L$,
 the Hilbert space $X:=\mathcal{D}(|L|^{\frac{1}{2}})$ can be decomposed into
 $X=X^-\oplus X^+$
such that the quadratic form:
\begin{equation*}
u\rightarrow \int_{\mathbb{R}^{N}}
 | \nabla u|^{2}+V(x) u^{2} dx,
\end{equation*}
is positive definite on $X^+$ and negative definite on  $X^-$, both $X^+$ and $X^-$
are infinite-dimensional. Such kind of functional is usually called strongly indefinite, and the associated variational problem is called {\em strongly indefinite problem}.

The inner product on $X=\mathcal{D}(|L|^{\frac{1}{2}})$ is defined by
\begin{equation}\label{defofnewnorm}
    \langle  u , v\rangle:=    \langle |L|^{\frac{1}{2}}u,   |L|^{\frac{1}{2}}v\rangle _{L^2}, \text{ for any } u,v \in X,
\end{equation}
where $\langle \cdot,\cdot \rangle_{L^2}$ is the usual inner product in $L^{2}(\mathbb{R}^{N})$.
The associated norm  for $X$ is 
\begin{equation*}\label{defofnewnorm2}
    \|u\|:=\langle |L|^{\frac{1}{2}}u,   |L|^{\frac{1}{2}}u \rangle ^{\frac{1}{2}}_{L^2}.
\end{equation*}
By a similar discussion  to  the appendix in \cite{cos}, it follows from $V(x)\in L^{\infty}(\mathbb{R}^{N})$ that  $X:=\mathcal{D}(|L|^{\frac{1}{2}})=H^{1}(\mathbb{R}^{N})$ and  the norms $\| \cdot\|$ and $\| \cdot\|_{H^1}$ are equivalent.
Moreover, $X^+$ and $X^-$ are also orthogonal with respect to $ \langle \cdot,\cdot\rangle$.

Let
\begin{equation}\label{defoforthogonalprojections}
    P :X\rightarrow X^-   \text{ and } Q :X \rightarrow X^+
\end{equation}
be the orthogonal projections,
then (\ref{5.energy1}) and (\ref{5.derivative1}) can be rewritten as
\begin{equation}\label{5.energy2}
 \varphi(u):=\frac{1}{2}(-\| Pu \|^{2}+\| Qu \|^{2})
    - \int_{\mathbb{R}^{N}} F(x,u)dx,
\end{equation}
and
\begin{equation}\label{5.derivative2}
 \varphi'(u)\phi= \langle Qu,\phi  \rangle - \langle Pu,\phi \rangle     -\int_{\mathbb{R}^{N}}f(x,u)\phi dx.
\end{equation}

In order to study the strongly indefinite variational problems as above, various variational methods have been developed
 in the last two decades(\cite{AlamaLi1,AlamaLi2,BartschDingNon-Metrizable,SK,Mederski1,Mederski2,SzulkinWeth}).
In paper \cite{SK}, Kryszewski and Szulkin  introduced the so-called  $\tau$-topology in a Hilbert space (see section 2 for details) and then established a generalized linking theorem for
\textbf{$\tau$-upper semi-continuous  functionals}.
By this linking theorem, they proved the existence of solutions for equation (\ref{5.equation}) under
 $(\textbf{V})$,   $\mathbf{(f_1)}$, $\mathbf{(f_2)}$ and the (AR) condition:
\begin{equation}
   0<\gamma F(x,t)\leq t f(x,t) \text{ for some }  \gamma>2  \text{ and for all  } x \in \mathbb{R}^N, t \in \mathbb{R}\backslash\{0\}. \tag{AR}
\end{equation}
In papers \cite{BartschDingNon-Metrizable,Bartsch-Ding}, Bartsch and Ding established some critical point theories
 in general Banach space(see also \cite{Ding}).
Thereafter, based on the $\tau$-upper semi-continuity  assumption, a series of critical point theorems were developed (see  
\cite{BC2,Ligongbao,SI1,Rup,weak-linking})
 and used to study the existence of  solutions for equation (\ref{5.equation}) with some special nonlinearities, such as,
 paper \cite{Ding-Lee} for super or asymptotically linear nonlinearities, papers \cite{Ackermann,TangxianhuaJDE} for nonlocal nonlinearities,
 and papers \cite{Chabrowski,Chen-Tang,weak-linking} for critical nonlinearities, etc.
 However, 
 all of these mentioned papers  require that the variational functional satisfies
 $\tau$-upper semi-continuity, which then implies that the nonlinear term in  (\ref{5.equation}) must have
 some kind of positivity, e.g., $F(x,u) \geq 0$.



Recently, the authors in \cite{Bernini-Bieganowski,CH} developed some critical point theorems
without assuming the $\tau$-upper semi-continuity and
 equations  with certain types of sign-changing nonlinear terms were studied (see also \cite{Chen-Chquard,FLiu-JianfuYang}).
More precisely,  in \cite{CH} the authors assumed further in equation  (\ref{5.equation}):
\begin{description}
\item[$(\text{f}_{\text{3}})$] For some $q \in (2,2^*)$, let
\begin{equation*}
    \kappa := \max\{ \sup\limits_{u \in X\backslash\{0\}} \frac{\|Pu\|_{L^q}}{\|u\|_{L^q}}  , \sup\limits_{u \in X\backslash\{0\}} \frac{\|Qu\|_{L^q}}{\|u\|_{L^q}} \}
 \quad  \text{and}   \quad
\mu_0 := \inf\limits_{u \in X\backslash\{0\}} \frac{\|u\|^2}{\|u\|_{L^2}^2}.
\end{equation*}
There exist positive constants $\rho$, $D_1$ and $D_2$ such that
\begin{equation*}
    F(x,t)\geq 0, \  \tilde{F}(x,t) :=\frac{1}{2}uf(x,t)-F(x,t) \geq D_1|t|^q, \text{ and } |f(x,t)|\leq D_2 |t|^{q-1},
\end{equation*}
for all $x\in \mathbb{R}^N$ and $|t|\geq\rho$. In additional,
\begin{equation*}
    2\kappa D_2 \Big(    \rho^{q-2}+\frac{1}{D_1} \sup \limits_{|t|\leq \rho, x\in\mathbb{R}^N} \big|  \frac{\tilde{F}(x,t)}{t^2}  \big|         \Big)
    +\sup \limits_{|t|\leq \rho,x\in\mathbb{R}^N} \big|   \frac{f(x,t)}{t}   \big| <\mu_0.
\end{equation*}
\end{description}
Then, they obtained in \cite{CH} a nontrivial solution for equation (\ref{5.equation}) under conditions $(\textbf{V})$ and $\mathbf{(f_1)}$-$\mathbf{(f_3)}$.

\begin{remark}
{\rm i):} Condition $\mathbf{(f_3)}$ is mainly used to ensure the boundedness of (PS)-sequence, or (C)-sequence.
{\rm ii):} There are many functions satisfying $\mathbf{(f_1)}$-$\mathbf{(f_3)}$, for example,
if $2<r<q<2^*$, then
\begin{equation}\label{an example of nonlinear term}
    f(x,t)=|t|^{q-2} t - \lambda|t|^{r-2} t,
\end{equation}
satisfies conditions $\mathbf{(f_1)}$-$\mathbf{(f_3)}$ if  $\lambda>0$ is sufficiently small (see \cite[Remark 1.4]{CH}).
In fact,  for $f$ given by (\ref{an example of nonlinear term}), noting
\begin{equation*}
    \tilde{F}(x,t)=\frac{q-2}{2q}|t|^q-\lambda \frac{r-2}{2r}|t|^r=\frac{q-2}{4q}|t|^q+(\frac{q-2}{4q}|t|^q-\lambda \frac{r-2}{2r}|t|^r),
\end{equation*}
we take
$D_1=\frac{q-2}{4q}$ and $\rho_1 :=\Big(\frac{ 2q(r-2) }{r(q-2)} \Big)^{\frac{1}{q-r}} \lambda^{\frac{1}{q-r}}$,
 a direct computation shows that $\tilde{F}(x,t)=D_1|t|^q$ if $t=\rho_1$ and then $\tilde{F}(x,t)\geq D_1|t|^q$ if $t\geq\rho_1$.
By $|f(x,t)|\leq|t|^{q-1}   + \lambda|t|^{r-1} $, it is easy to see $|f(x,t)|\leq D_2 |t|^{q-1} $ if $t\geq\rho_1$, where
$D_2=1+\frac{r(q-2)}{2q(r-2)}$.
On the other hand we have $F(x,t)\geq 0$ if $t\geq \rho_2 :=  (\frac{q}{r})^{\frac{1}{q-r}}  \lambda^{\frac{1}{q-r}}$.
Clearly, by taking $\rho  =\max \{\rho_1 , \rho_2\}$ and due to $\rho$ can be arbitrarily small if $\lambda>0$ is small enough,
condition $\mathbf{(f_3)}$ is satisfied if $\lambda>0$ is small enough.
\end{remark}

The  sign-changing nonlinearity in  (\ref{5.equation}) plays an important role in nonlinear optics with material mixture of focusing and defocusing (see \cite{BieganowskiSurvey,Kuchment,DLMills,NieW,Pankov}).
For the positive definite case with nonlinearity difference of
two functions we refer to\cite{Bieganowski, Bieganowski-Mederski}.

To the best of our knowledge, there seems no any results about infinitely many solutions for  equation \eqref{5.equation} involving   sign-changing potential and nonlinearity. In this case,     the associated variational functional $\varphi(u)$ is not only strongly indefinite  but also $F$ in
$\varphi$ may change sign, and this makes it more challenging for finding multiple solutions.
In \cite{SK}, to get   infinitely many geometrically distinct solutions of (\ref{5.equation}) under the (AR) condition, the authors defined a special energy level
\[
     \beta=\max\limits_{u \in \mathcal{K}\backslash \{0\}} \varphi(u),
     \]
where $\mathcal{K}:=\{u\in H^{1}(\mathbb{R}^{N}):\varphi'(u)=0 \}$ is assumed to be finite set under the action of $\mathbf{Z}^N-$translation,
and they first proved two kinds of deformation lemmas for general level sets under different energy levels,
 i.e.,  for $\varphi^{d+\epsilon}$ if $d\geq\beta+1$
and for $\varphi^{d+\epsilon}\backslash\mathcal{N}$ if $d<\beta+1$,  respectively, where $\mathcal{N}$ is a symmetric $\tau$-open set with
genus $\gamma(\bar{\mathcal{N}})=1$ (see \cite[Lemma 4.6]{SK}).
Then, they proved the existence of infinitely many geometrically distinct solutions of (\ref{5.equation}) by an indirect argument
originated by \cite{V-Rabinowitz1,V-Rabinowitz2}.
However, because the deformation lemmas  in \cite{SK} strictly rely on the $\tau$-upper semi-continuity,
their method seems invalid when $F$  changes sign. To overcome this difficulty,  in this paper we introduce a new energy level
\begin{equation*}
    \zeta:=\sup\limits_{\|Qu\| \leq    \delta+1}\varphi (u),
\end{equation*}
where $\delta:=\sup\{\|Qu\| :u\in \mathcal{K}\}$,
which is crucial to the proof of deformation lemmas respectively  for   two  symmetric bounded $\tau$-compact sets, i.e., for  $M$
 if $ \sup\limits_{u\in M}\varphi(u) >\zeta+1$, and for
$M\backslash\mathcal{N}$ (by a descending flow on $ \varphi^{\zeta+2}$) if $ \sup\limits_{u\in M}\varphi(u) <\zeta+2$
 (see  Lemmas \ref{5.first deformation lemma} and \ref{5.second deformation lemma} in Section 3).
To prove these  deformation lemmas,
some new ideas must be used in constructing  pseudo-gradient vector fields to avoid using the $\tau$-upper semi-continuity.
Furthermore, we also need to use a new strategy in  proving the split lemma of (PS)-sequence (see Lemma \ref{5.PS structure})
for the lack of  (AR) condition.

Before giving our main result of the paper, we introduce two further conditions on $f$ as follows:
\begin{description}
\item[$(\text{f}_{\text{4}})$] $f(x,-t)=-f(x,t) $ for all $x\in \mathbb{R}^N$ and $t\in \mathbb{R}$.
\item[$(\text{f}_{\text{5}})$]  There exist $\bar{c}>0$ and $ \varepsilon_0>0$ such that
\begin{equation*}
    |f(x,t+s)-f(x, t)|\leq \bar{c}|s|(1+|t|^{p-1})
\end{equation*}
for all $x\in \mathbb{R}^N$ and $t,s\in \mathbb{R}$ with $|s|\leq\varepsilon_0$.
\end{description}
Clearly, $f$ given by (\ref{an example of nonlinear term})  satisfies all the conditions $\mathbf{(f_1)}$-$\mathbf{(f_5)}$ if  $\lambda>0$ is sufficiently small.

Finally, we state our main result as follows:
\begin{theorem}\label{5.main result}
If the conditions $(\mathbf{V})$ and $\mathbf{(f_1)}$-$\mathbf{(f_5)}$ are satisfied, then
equation (\ref{5.equation}) has infinitely many geometrically distinct solutions.
\end{theorem}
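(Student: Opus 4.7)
The plan is to combine the non-$\tau$-usc linking framework of \cite{CH} with the $\mathbf{Z}^N\times\mathbf{Z}_2$-symmetry produced by the periodicity in $(\textbf{V})$ and the oddness in $\mathbf{(f_4)}$, and argue by contradiction in the style of Bartsch--Ding--Willem for strongly indefinite problems. Under $\mathbf{(f_4)}$, the functional $\varphi$ is even, and it is $\mathbf{Z}^N$-invariant via the translation action $(a\cdot u)(x)=u(x-a)$; its critical set $\mathcal{K}:=\{u\in X\setminus\{0\}:\varphi'(u)=0\}$ is therefore a union of group orbits. Assume, for contradiction, that the set $\mathcal{K}/\mathbf{Z}^N$ of geometrically distinct nontrivial solutions is finite. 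Modulo $\mathbf{Z}^N$-translations one then obtains a discreteness property: there is $\alpha>0$ such that $\|u-v\|\geq\alpha$ whenever $u,v\in\mathcal{K}$ lie in distinct $\mathbf{Z}^N$-orbits, and only finitely many positive critical values occur, each isolated.

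Next, I would set up a minimax scheme producing critical levels $c_k\to\infty$. Fix an orthonormal basis $(e_j)_{j\geq 1}$ of $X^+$, let $Y_k:=\mathrm{span}\{e_1,\dots,e_k\}$, put $E_k:=X^-\oplus Y_k$, and consider the symmetric linking sets
\begin{equation*}
 N_\rho:=\{u\in X^+:\|u\|=\rho\},\qquad M_k:=\{u\in E_k:\|u\|\leq R_k\},\qquad \partial M_k,
\end{equation*}
with $R_k>\rho$ chosen so that $\varphi\leq 0$ on $\partial M_k$; the existence of $R_k$ comes from the upper bound on $\varphi|_{E_k}$ forced by $\mathbf{(f_3)}$ and Sobolev embeddings, while $\varphi\geq\delta>0$ on $N_\rho$ follows from $\mathbf{(f_1)}$, $\mathbf{(f_2)}$. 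Defining
\begin{equation*}
 c_k:=\inf_{\gamma\in\Gamma_k}\sup_{u\in M_k}\varphi(\gamma(u)),
\end{equation*}
over an admissible class $\Gamma_k$ of odd, $\tau$-continuous (but \emph{not} $\tau$-usc-requiring) deformations as constructed in \cite{CH}, one verifies by the linking argument of \cite{CH} that $c_k\geq\delta$, and by comparing with an $L^q$-estimate on $E_k$ using $\mathbf{(f_3)}$ one shows $c_k\to\infty$.

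The core analytic step is then to produce, for each $k$, a $(C)_{c_k}$-sequence $\{u_n^k\}$ and to extract a critical point from it in spite of the $\mathbf{Z}^N$-action. Boundedness of $\{u_n^k\}$ is exactly the content of the bound furnished by $\mathbf{(f_3)}$, established in \cite{CH}. A Lions-type vanishing dichotomy in $H^1(\mathbf{R}^N)$ together with periodicity provides a sequence of translates $a_n\in\mathbf{Z}^N$ such that $\tilde u_n^k:=a_n\cdot u_n^k\rightharpoonup u^k\neq 0$, and weak sequential continuity of $\varphi'$ gives $\varphi'(u^k)=0$. Since $c_k\to\infty$ but only finitely many positive critical orbits exist by assumption, the sequence $\varphi(u^k)$ can only attain finitely many values, yielding the desired contradiction --- provided one can rule out a loss of critical level through weak-to-strong passage. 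This is the main obstacle: without $\tau$-upper semicontinuity the standard deformation/splitting lemma is unavailable, so one must show a Brezis--Lieb-type decomposition $\varphi(u_n^k)=\varphi(u^k)+\varphi(u_n^k-a_n^{-1}\cdot u^k)+o(1)$ compatible with the sign-changing nonlinearity. Here $\mathbf{(f_5)}$ is exactly what allows the required Lipschitz-type estimate on $f$, producing the splitting identity and letting one iterate: either the residual vanishes (done) or one extracts another translate converging to a second critical point, and after finitely many steps the energy is consumed, giving $c_k$ as a finite sum of previously known positive critical values, impossible for large $k$.

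To conclude, once the splitting lemma is in place the multiplicity argument is standard: the contradiction shows $\mathcal{K}/\mathbf{Z}^N$ is infinite, which is the statement of Theorem \ref{5.main result}. The only non-routine ingredients are the construction of the odd admissible deformation class $\Gamma_k$ compatible with the $\mathbf{Z}_2$-symmetry imposed by $\mathbf{(f_4)}$ and the verification of the splitting lemma using $\mathbf{(f_5)}$; everything else follows by assembling the linking theorem of \cite{CH}, the boundedness from $\mathbf{(f_3)}$, and the concentration-compactness principle.
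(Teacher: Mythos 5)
Your proposal reproduces the general contradiction framework (assume $\mathcal{F}$ is finite, use the discreteness $\alpha>0$ of orbit distances, use $\mathbf{(f_3)}$ for boundedness, use $\mathbf{(f_5)}$ for a Brezis--Lieb-type splitting of (PS)-sequences), and these ingredients do match the paper's Lemma~\ref{5.PS structure}. However there is a genuine gap in the step that is supposed to produce the contradiction: you assert that the minimax levels $c_k$ tend to $+\infty$ and that this is incompatible with finitely many positive critical orbits. That unboundedness is not established and is, in fact, exactly the issue the paper's Section~4 is organized around. The paper's minimax values are defined through the pseudoindex $\gamma^*$ built on the class $\tilde{\mathcal H}(A)$ of admissible odd maps on bounded $\tau$-compact sets, and the whole point of distinguishing case $\mathbf{(I)}$ ($c_k>\zeta+1$ for some $k$) from case $\mathbf{(II)}$ ($b\leq c_k\leq\zeta+1$ for all $k$) is that the levels need not blow up. Ruling out case $\mathbf{(II)}$ is precisely the hard part, and it requires the second deformation lemma (Lemma~\ref{5.second deformation lemma}), which lowers the level by a fixed $\epsilon>0$ only after removing a symmetric $\tau$-open neighborhood $\mathcal{N}$ of the ``bad'' set $[Q\mathcal{F},l]\backslash\{0\}$ with $\gamma(\bar{\mathcal{N}})=1$, together with the subadditivity estimate $\gamma_H^*(M\backslash\mathcal{N})\geq\gamma_H^*(M)-1$. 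Your outline has no mechanism to handle an accumulation of minimax values below $\zeta+1$.

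The second, related gap is that you invoke ``an admissible class $\Gamma_k$ of odd, $\tau$-continuous deformations as constructed in \cite{CH}'' as if the deformation machinery were available off the shelf. It is not: \cite{CH} gives an existence result via a linking theorem, but does \emph{not} supply the equivariant deformation lemmas needed for multiplicity, and the usual construction requires $\tau$-upper semicontinuity of $\varphi$, which fails here because $F$ is sign-changing. The paper's actual contribution is the construction of pseudogradient vector fields $\chi_1,\chi_2$ (Lemmas~\ref{5.vectorfield1},~\ref{5.vectorfield2}) and two deformation lemmas restricted to bounded $\tau$-compact sets $M\in\tilde{\Sigma}$, which circumvents $\tau$-usc. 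Your proposal defers this to ``standard'' arguments, but that is exactly where the novelty and the difficulty lie; without it, neither the comparison $c_k\leq\sup\varphi(h(M))$ nor the descent $h(M\backslash\mathcal{N})\subset\varphi^{\bar\beta-\epsilon}$ can be justified. In short, you have correctly identified the splitting lemma and the role of $\mathbf{(f_3)}$--$\mathbf{(f_5)}$, but the multiplicity machinery (pseudoindices on $\tilde{\Sigma}$, the two deformation lemmas, and the two-case analysis in place of $c_k\to\infty$) is missing, and the argument as written would not close.
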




Our theorem seems to be the first result on  multiple solutions for a periodic Schr\text{$\ddot{\text{o}}$}dinger
equation with both the potential and nonlinear term changing sign. In this case, we lose both the (AR) codition and the \textbf{$\tau$-upper semi-continuity} for the variational functional.
Moreover, the Fountain Theorem obtained in \cite{OUR}  does not work under the conditions of our
Theorem \ref{5.main result} because the (PS)-condition (which is needed in \cite{OUR})  cannot be satisfied when $\varphi$ is invariant under $\mathbf{Z}^N-$translation.

This paper is organized as follows: In section 2, some notations and several useful lemmas are  introduced.
In section 3,  two important deformation lemmas are established. In section 4,  Theorem \ref{5.main result} is proved
by the argument of contradiction based on a mini-max value defined by pseudo-indexes and the two deformation lemmas obtained in section 3.

\vskip4mm
 \par\noindent
\section{Preliminaries}
~

We begin this section by giving some notations and definitions. Let
\begin{equation}\label{defofhuaK}
     \mathcal{K}:=\{u\in H^{1}(\mathbb{R}^{N}):\varphi'(u)=0 \}
\end{equation}
be the set of   weak solutions of (\ref{5.equation}) and let
\begin{equation}\label{defofhuaF}
    \mathcal{F}:=\mathcal{K}/\mathbf{Z}^N
\end{equation}
be  the set of arbitrarily chosen representatives of $\mathcal{K}$ under the action of $\mathbf{Z}^N-$ translation.
By  \cite[Theorem 1.5]{CH},  we know that $\mathcal{F}\backslash\{0\}\neq\emptyset$ if conditions $\mathbf{(V)}$ and $\mathbf{(f_1)}$-$\mathbf{(f_3)}$ hold.
\begin{definition}\label{defofCcsequence}
Let  $\varphi\in C^{1}(X,\mathbb{R})$ and $c\in\mathbb{R}$. A sequence $\{u_{n}\}\subset X$  is called a $(C)^{c}$-sequence if
 $\sup\limits_{n}\varphi(u_{n})\leq c$  and   $(1+\|u_{n}\|)\|\varphi'(u_{n})\|_{X'}\rightarrow 0$   as   $n\rightarrow\infty$.
\end{definition}
\begin{definition}
Let  $\varphi\in C^{1}(X,\mathbb{R})$ and $c\in\mathbb{R}$. A sequence $\{u_{n}\}\subset X$  is called a $(PS)^{c}$-sequence if
 $\sup\limits_{n}\varphi(u_{n})\leq c$ and $\|\varphi'(u_{n})\|_{X'}\rightarrow 0$ as  $n\rightarrow\infty$.
\end{definition}

Clearly, a $(C)^{c}$-sequence is  a  $(PS)^{c}$-sequence, but only  a bounded $(PS)^{c}$-sequence is also a $(C)^{c}$-sequence.
As in \cite[section 2]{SK},  we have the following definition related to the
 $\tau$-topology.

\begin{definition}
Let  $\{e_{j} \}_{j\geq 1}$   be an  orthonormal  basis of  $X^-$,
 the $\tau$-topology on $X$ = $X^- \oplus X^+$ is
 the topology associated with the following norm
\begin{center}
    $\| u \|_{\tau}:=
      \max\{\sum_{j=1}^{\infty}\frac{1}{2^{j}}|\langle Pu,e_{j}\rangle|,
      \| Qu \|\}$ ,  $u\in X$,
\end{center}
where $P$ and $Q$ are given in (\ref{defoforthogonalprojections}).
\end{definition}
By \cite[Remark 2.1(iii)]{SK},  we know that if $\{u_{n}\}\subset X$ is a bounded sequence, then
\vspace{-5mm}
 \begin{equation}\label{tau-converge dengjia}
u_{n} \stackrel{n}\rightarrow u \text{ in } \tau \text{-topology  } \Leftrightarrow
  Pu_{n} \stackrel{n}\rightharpoonup Pu  \text{ weakly in } X^-  \text{ and } Qu_{n} \stackrel{n}\rightarrow Qu \text{ strongly in } X^+.
 \end{equation}

Following the paper \cite{Rup}, we give now the  definitions
 on admissible homotopy/map.
\begin{definition}
Let $A\subset X$ be a closed subset. For $T>0$, a map $g : [0,T] \times A\rightarrow X$ is an admissible homotopy if
\begin{itemize}
  \item $g$ is $\tau$-continuous in the sense that, for $\{t_m\}\subset [0,T], \{u_m\}\subset A$,
             $$g(t_{m},u_{m})\overset{m}{\rightarrow} g(t,u)  \text{ in }   \tau\text{-topology} $$
             whenever $t_{m}\overset{m}{\rightarrow} t$ and $u_{m}\overset{m}{\rightarrow} u$ in $\tau$-topology.
  \item  For any $(t,u)\in [0,T] \times A$, there exists a neighborhood $W_{(t,u)}$ of $(t,u)$ in
          the $|\cdot|\times\tau-$ topology such that
          $$\{  v-g(s,v): (s,v)\in  W_{(t,u)}\bigcap  ([0,T] \times A  )       \}$$
          is contained in a finite-dimensional subspace of $X$.
\end{itemize}
\end{definition}
In particular, for a  map being independent of $t$, e.g., $g(t,u) \equiv g(u)$,  we have the following definition.
\begin{definition}
Let $A\subset X$ be a closed subset. A map  $h :  A\rightarrow X$ is an admissible map if
 $h$ is $\tau$-continuous  and, for any $ u \in  A$,  there is a $\tau$-neighborhood $W_u $ of $u$ such that $\{v-h( v): v \in  W_ u \cap A )\}$ 
    is contained in a finite-dimensional subspace of $X$.
\end{definition}

For  any integer $k\geq 1$, let
\[
S_{r}:=\{u\in X: \|u\|=r\}
\text{  and  }
X_{k}:=\overline{(\bigoplus_{j=1}^{k}\mathbb{R}f_{j})\oplus X^-},
\]
where   $\{f_{j} \}_{j\geq 1}$  is an  orthonormal  basis of   $X^+$. Then, we have
\begin{lemma}\label{5.lemma2.1}
Under the conditions $(\textbf{\emph{V}})$ and  $(\textbf{\emph{f}}_\textbf{\emph{1}})$-$(\textbf{\emph{f}}_\textbf{\emph{3}})$, let
$\varphi$ be  the functional defined by (\ref{5.energy1}),  then
\begin{description}
\item[(i)] There exists    $r>0$ such that
$$b:=\inf\limits_{u\in X^+\bigcap S_r} \varphi(u)>0.$$
\item[(ii)] For $r$ being obtained in  \textbf{\emph{(i)}}  and  any integer $k\geq 1$, there exists $R_k>r$ such that
$$\sup\limits_{u\in X_k,\|u\|=R_k} \varphi(u)   <  a :=\inf\limits_{\|u\|\leq r} \varphi(u).$$
\item[(iii)] For any $\delta<+\infty$, there holds
$$\sup\limits_{\|Qu\| \leq \delta} \varphi(u)< +\infty.$$
Moreover,
$$\limsup\limits_{\|Qu\| \rightarrow0} \varphi(u)\leq 0.$$
\end{description}
\end{lemma}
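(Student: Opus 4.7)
The plan is to treat all three parts via a uniform pointwise lower bound on $F$ obtained from $(\mathbf{f_3})$, together with a standard local estimate for (i).

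For (i) I would use that $(\mathbf{f_1})$--$(\mathbf{f_2})$ yield $|F(x,t)|\le \varepsilon t^2 + C_\varepsilon |t|^p$ for every $\varepsilon>0$. For $u\in X^+$, combining this with the Sobolev embedding and the equivalence of $\|\cdot\|$ and $\|\cdot\|_{H^1}$ gives $\varphi(u)\ge \tfrac12 \|u\|^2 - C\varepsilon\|u\|^2 - C_\varepsilon'\|u\|^p$. Choosing $\varepsilon$ sufficiently small and then $r$ sufficiently small makes the right-hand side strictly positive on $X^+\cap S_r$, yielding $b>0$.

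The heart of the argument for (ii) and (iii) is a pointwise lower bound
$$F(x,t)\ge c_0|t|^q - c_1 t^2 \qquad \text{for all } (x,t)\in \mathbb{R}^N\times\mathbb{R},$$
derived from $(\mathbf{f_3})$. Using the identity $\frac{d}{dt}(F(x,t)/t^2)=2\tilde F(x,t)/t^3$, integrating from $\rho$ to $t$ (symmetrically for $t<0$) and applying $\tilde F(x,t)\ge D_1|t|^q$ for $|t|\ge\rho$, together with the bounds from $(\mathbf{f_1})$--$(\mathbf{f_2})$ on $|t|\le\rho$, produces positive constants $c_0,c_1$ with the stated inequality. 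The main obstacle I anticipate is ensuring $c_1<\mu_0/2$; this is precisely what the numerical inequality at the end of $(\mathbf{f_3})$ is designed to guarantee.

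With this bound and $\|u\|_{L^2}^2\le \|u\|^2/\mu_0 = (\|Pu\|^2+\|Qu\|^2)/\mu_0$, I obtain
$$\varphi(u)\le \Bigl(\tfrac12+\tfrac{c_1}{\mu_0}\Bigr)\|Qu\|^2 + \Bigl(\tfrac{c_1}{\mu_0}-\tfrac12\Bigr)\|Pu\|^2 - c_0\|u\|_{L^q}^q.$$
For (iii) the two nonpositive terms can be dropped, bounding $\varphi(u)$ above by a function of $\|Qu\|\le\delta$ alone; letting $\delta\to 0$ yields the $\limsup$ claim. For (ii) I would restrict to $X_k=X^-\oplus\mathrm{span}\{f_1,\dots,f_k\}$ and write $u=Pu+Q_k u$ with $Q_k u\in \mathrm{span}\{f_1,\dots,f_k\}=QX_k$. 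Since this span is $k$-dimensional, $\|Q_k u\|_{L^q}\ge c_k\|Q_k u\|$ for some $c_k>0$; combined with $\|Q_k u\|_{L^q}\le \kappa\|u\|_{L^q}$ from the definition of $\kappa$, this yields $\|u\|_{L^q}\ge (c_k/\kappa)\|Q_k u\|$. Substituting into the displayed inequality, the strictly negative coefficient of $\|Pu\|^2$ together with the term $-c_0(c_k/\kappa)^q\|Q_k u\|^q$ (with $q>2$) forces $\varphi(u)\to -\infty$ as $\|u\|\to\infty$ in $X_k$, so a sufficiently large $R_k$ provides $\sup_{u\in X_k,\,\|u\|=R_k}\varphi(u)<a$.
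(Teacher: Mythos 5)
The paper gives no proof of this lemma, citing it directly from \cite[Lemma 3.1]{CH}; so the comparison is really with that cited source, whose key output is the estimate the paper later records as (\ref{xiajie to youjie}). Your argument for \textbf{(i)} is standard and correct. For \textbf{(ii)} and \textbf{(iii)}, the pointwise inequality $F(x,t)\ge c_0|t|^q-c_1t^2$ with $c_0=\frac{2D_1}{q-2}$, obtained by integrating $\frac{d}{dt}\bigl(F(x,t)/t^2\bigr)=2\tilde F(x,t)/t^3\ge 2D_1 t^{q-3}$ on $|t|\ge\rho$, is exactly the right device, and once $c_1<\mu_0/2$ is secured, your displayed upper bound on $\varphi$ reproduces (\ref{xiajie to youjie}) (with the extra $-c_0\|u\|_{L^q}^q$ term that you then use for \textbf{(ii)} via the finite-dimensional norm equivalence on $QX_k$). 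The point you flag as the ``main obstacle'' --- that the numerical inequality in $(\mathbf{f_3})$ forces $c_1<\mu_0/2$ --- does in fact go through, but the verification is not immediate from the inequality as written: one first needs to observe that the hypotheses $\tilde F\ge D_1|t|^q$ and $|f|\le D_2|t|^{q-1}$ on $|t|\ge\rho$ give $F(x,t)\le\bigl(\tfrac{D_2}{2}-D_1\bigr)|t|^q$ there, and comparing with the lower bound $F\ge c_0|t|^q-c_1t^2$ as $|t|\to\infty$ forces $\tfrac{D_2}{2}-D_1\ge c_0$, i.e.\ $D_2\ge\frac{2q}{q-2}D_1$. Feeding this back into the constraint of $(\mathbf{f_3})$ (together with $\kappa\ge1$) yields $c_1=S_F+\frac{2D_1\rho^{q-2}}{q-2}<\mu_0/2$, where $S_F:=\sup_{|t|\le\rho,x}|F(x,t)/t^2|\le\frac12\sup|f/t|+\sup|\tilde F/t^2|$. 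You should spell out this chain; as stated, asserting that the constraint ``is precisely designed'' to give $c_1<\mu_0/2$ is a gap, since $\frac{2D_1\rho^{q-2}}{q-2}$ can a priori be large when $q$ is close to $2$, and the hidden relation $D_2\ge\frac{2q}{q-2}D_1$ is what rescues it. With that supplied, the rest of your \textbf{(ii)}, \textbf{(iii)} is sound.
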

\begin{proof}
\textbf{(i)} and \textbf{(iii)} directly follows from the steps 1 and  3 in the proof of  \cite[Lemma 3.1]{CH},  respectively.
Similar to the step 2 of the proof of  \cite[Lemma 3.1]{CH}, we know that $\varphi(u)\rightarrow -\infty$
as $\|u\|\rightarrow -\infty$ and $u \in X_k$, hence part \textbf{(ii)}  is also  proved.
\end{proof}

Clearly, $a :=\inf\limits_{\|u\|\leq r} \varphi(u)\leq b:=\inf\limits_{u\in X^+\bigcap S_r} \varphi(u)$.
By Lemma \ref{5.lemma2.1}\textbf{(i)} and  \textbf{(iii)},
there exist   $r>0$  and $\delta_0 >0$  such that
\begin{equation}\label{5.r}
 \sup\limits_{\|Qu\|\leq \delta_0} \varphi(u)  < b:= \inf\limits_{u\in X^+\bigcap S_r} \varphi(u).
\end{equation}

By (3.18)  of  \cite{CH}, we know that, if  $\mathbf{(V)}$ and  $\mathbf{(f_1)} $-$\mathbf{(f_3)}$ hold,  there exist constants $c_1>0$ and $c_2>0$, which depend only on $\mu_0$ in $\mathbf{(f_3)}$, such that
\begin{equation}\label{xiajie to youjie}
    \varphi(u)\leq c_1 \|Qu\|^2 - c_2\|Pu\|^2.
\end{equation}
Hence,  (\ref{tau-converge dengjia}) and (\ref{xiajie to youjie}) imply that
\begin{lemma}\label{5.upper bounded}
Under  conditions $(\textbf{\emph{V}})$ and  $(\textbf{\emph{f}}_\textbf{\emph{1}})$-$(\textbf{\emph{f}}_\textbf{\emph{3}})$, let
$\{u_n\} \subset \varphi_c:=\{u\in X : \varphi(u)\geq c\}$ be any sequence with   $\|u_n-u_0\|_\tau \overset{n}{\rightarrow} 0$,  then
\begin{equation*}
    Pu_n \overset{n}{\rightharpoonup} Pu_0  \text{ weakly in } X^- \text{ and  } Qu_n\overset{n}{\rightarrow} Qu_0 \text{ strongly in } X^+.
\eqno{\Box}
\end{equation*}
\end{lemma}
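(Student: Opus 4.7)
The plan is to reduce the lemma to proving that $\{u_n\}$ is bounded in $X$, since the equivalence (\ref{tau-converge dengjia}) then delivers exactly both conclusions (weak convergence of $Pu_n$ to $Pu_0$ in $X^-$, strong convergence of $Qu_n$ to $Qu_0$ in $X^+$) for free. So essentially all the work is to extract an a priori norm bound on $u_n$ from the two hypotheses $\varphi(u_n)\ge c$ and $\|u_n-u_0\|_\tau\to 0$.

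To get the bound I would combine the lower bound $\varphi(u_n)\ge c$ with the upper estimate (\ref{xiajie to youjie}), which (after identifying $u^+=Qu$ and $u^-=Pu$) gives
$$c\;\le\;\varphi(u_n)\;\le\;c_1\|Qu_n\|^2-c_2\|Pu_n\|^2.$$
From the definition of $\|\cdot\|_\tau$, the hypothesis $\|u_n-u_0\|_\tau\to 0$ forces in particular $\|Q(u_n-u_0)\|\to 0$, hence $\|Qu_n\|\to\|Qu_0\|$ and so $\|Qu_n\|\le M$ for some $M$. Rearranging the previous display then yields $c_2\|Pu_n\|^2\le c_1 M^2-c$, i.e. $\{Pu_n\}$ is bounded in $X^-$. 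Since $X^+$ and $X^-$ are orthogonal with respect to $\langle\cdot,\cdot\rangle$ we get $\|u_n\|^2=\|Pu_n\|^2+\|Qu_n\|^2$, and therefore $\{u_n\}$ is bounded in $X$. Once boundedness is in hand, invoking (\ref{tau-converge dengjia}) closes the proof.

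The only genuinely non-formal ingredient is the estimate (\ref{xiajie to youjie}), which is the mechanism that turns the ``cheap'' bound on $Qu_n$ coming from the $\tau$-topology into a true $X$-bound via the lower bound on $\varphi$; but this estimate has already been established (it is quoted from \cite{CH}), so in the present argument there is no real obstacle to overcome — the lemma is an unpacking of definitions conditional on that a priori inequality. The interest of stating it separately is precisely to record this interaction between the indefinite energy and the $\tau$-topology, which will be used repeatedly in the subsequent deformation and pseudoindex arguments.
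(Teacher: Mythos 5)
Your proof is correct and is essentially the argument the paper has in mind: the paper does not spell out a proof of Lemma \ref{5.upper bounded} but simply states it as a consequence of (\ref{tau-converge dengjia}) and (\ref{xiajie to youjie}), and your unpacking — bound $\|Qu_n\|$ via the $\tau$-norm, bound $\|Pu_n\|$ via (\ref{xiajie to youjie}) and $\varphi(u_n)\ge c$, then invoke (\ref{tau-converge dengjia}) — is exactly the intended chain of reasoning.
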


In \cite[Lemma 3.2]{CH}, it is  proved that any  $(C)^{c}$-sequence of  $\varphi$, see  (\ref{5.energy1}),  is bounded in $X$ under conditions $(\textbf{\textbf{V}})$ and  $(\textbf{\textbf{f}}_\mathbf{\textbf{1}})$-$(\textbf{\textbf{f}}_\textbf{\textbf{3}})$.
By an argument almost the same as   \cite[Lemma 3.2]{CH}, we can easily prove that
\begin{lemma}\label{5.PS bounded}
Let  $(\textbf{\emph{V}})$ and  $(\textbf{\emph{f}}_\textbf{\emph{1}})$-$(\textbf{\emph{f}}_\textbf{\emph{3}})$ be satisfied. If
$\{u_n\}\subset X$ is a $(PS)^{c}$-sequence of $\varphi$, then
$$\sup\limits_{n}\|u_n\|\leq C,$$
for some $C>0$ (independent of $n$).
\hfill{$\Box$}
\end{lemma}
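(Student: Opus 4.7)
The plan is to argue by contradiction, reworking the proof of \cite[Lemma 3.2]{CH} (which establishes boundedness of $(C)^c$-sequences) so that only the weaker $(PS)^c$-information is required. The crucial observation is that the extra factor $(1+\|u_n\|^2)$ of the Cerami condition enters in \cite{CH} only through the vanishing $\varphi'(u_n)u_n\to 0$; for a $(PS)^c$-sequence, if one assumes $\|u_n\|\to\infty$ for contradiction, then $\varphi'(u_n)u_n/\|u_n\|^2\to 0$ plays the same role once every estimate is divided by an additional power of $\|u_n\|$.

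Concretely, suppose toward contradiction that $\{u_n\}$ is a $(PS)^c$-sequence and, along a subsequence, $\|u_n\|\to\infty$. Set $v_n:=u_n/\|u_n\|$, so $\|v_n\|=1$. From the identity $\int_{\mathbb{R}^N}\tilde F(x,u_n)\,dx=\varphi(u_n)-\tfrac12\varphi'(u_n)u_n\leq c+\|\varphi'(u_n)\|_{X'}\|u_n\|$, the lower bound $\tilde F\geq D_1|t|^q$ on $\{|t|\geq\rho\}$, the upper bound $|\tilde F|\leq A t^2$ on $\{|t|\leq\rho\}$ (where $A:=\sup_{|t|\leq\rho,x}|\tilde F/t^2|$ is finite by $(\textbf{f}_\textbf{3})$), and the embedding $\|v_n\|_{L^2}^2\leq 1/\mu_0$, division by $\|u_n\|^q$ yields after including both pieces
\[
\limsup_{n\to\infty}\|u_n\|^{q-2}\|v_n\|_{L^q}^q \;\leq\; \frac{1}{\mu_0}\Bigl(\rho^{q-2}+\frac{A}{D_1}\Bigr).
\]

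Next, testing the equation against $Qu_n-Pu_n$,
\[
\varphi'(u_n)(Qu_n-Pu_n)=\|u_n\|^2-\int_{\mathbb{R}^N} f(x,u_n)(Qu_n-Pu_n)\,dx,
\]
and dividing through by $\|u_n\|^2$ gives, since $\|\varphi'(u_n)\|_{X'}/\|u_n\|\to 0$,
\[
1\;=\;\lim_{n\to\infty}\int_{\mathbb{R}^N} f(x,u_n)(Qv_n-Pv_n)/\|u_n\|\,dx.
\]
Splitting the integral at $|u_n|=\rho$ and using $|f(x,t)|\leq B|t|$ on $\{|t|\leq\rho\}$ (with $B:=\sup_{|t|\leq\rho,x}|f/t|$ finite by $(\textbf{f}_\textbf{3})$), $|f(x,t)|\leq D_2|t|^{q-1}$ on $\{|t|>\rho\}$, together with $\|Qv_n-Pv_n\|_{L^q}\leq 2\kappa\|v_n\|_{L^q}$ and the above estimate on $\|u_n\|^{q-2}\|v_n\|_{L^q}^q$, the right-hand side is dominated by
\[
\frac{1}{\mu_0}\Bigl(B+2\kappa D_2\rho^{q-2}+\frac{2\kappa D_2 A}{D_1}\Bigr)\;<\;1,
\]
the strict inequality being exactly the quantitative hypothesis $(\textbf{f}_\textbf{3})$. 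This contradicts the value $1$ and forces $\{u_n\}$ to be bounded.

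The only real technical point is the careful bookkeeping of the constants appearing in $(\textbf{f}_\textbf{3})$ and the verification that every estimate of \cite[Lemma 3.2]{CH} survives the rescaling by $\|u_n\|$; no new analytical ingredient beyond the estimates already present in \cite{CH} is required.
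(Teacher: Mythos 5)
The paper gives no proof of this lemma --- it simply says ``As a consequence of \cite[Lemma 3.2]{CH}, we can easily prove that'' and closes with $\Box$. Your argument is the correct elaboration of what is left implicit, and it makes precise a point the paper glosses over: one cannot apply \cite[Lemma 3.2]{CH} as a black box, since its hypothesis (the Cerami condition $(1+\|u_n\|^2)\|\varphi'(u_n)\|\to 0$) is strictly stronger than the $(PS)^c$ condition when $\|u_n\|\to\infty$. Your key observation --- that the extra Cerami factor is used in \cite{CH} only to force $\varphi'(u_n)u_n\to 0$ in the identity $\int\tilde F(x,u_n)=\varphi(u_n)-\tfrac12\varphi'(u_n)u_n$ and in the tested equation $\varphi'(u_n)(Qu_n-Pu_n)=\|u_n\|^2-\int f(x,u_n)(Qu_n-Pu_n)$, and that under the contradiction hypothesis $\|u_n\|\to\infty$ both uses survive after division by $\|u_n\|^2$, since then $\|\varphi'(u_n)\|_{X'}/\|u_n\|\to 0$ --- is exactly the right one, and the arithmetic closing the contradiction via $(\textbf{f}_\textbf{3})$ checks out. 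One cosmetic slip: you say ``division by $\|u_n\|^q$'' before the first displayed estimate, but since $\|u_n\|^{q-2}\|v_n\|_{L^q}^q=\|u_n\|_{L^q}^q/\|u_n\|^2$, the effective normalization is by $\|u_n\|^2$; this does not affect the conclusion.
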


So, if $\{u_n\}$ is a $(PS)^{c}$-sequence of $\varphi$, by Lemma \ref{5.PS bounded}  we may  define
$$ M_c:= \limsup \limits_{n\rightarrow\infty} \|u_n\|,$$
and $M_c\leq C <+\infty$.
Moreover,  we have the following compactness result.

\begin{lemma}\label{5.PS structure}
Under  conditions $(\textbf{\emph{V}})$ and  $(\textbf{\emph{f}}_\textbf{\emph{1}})$-$(\textbf{\emph{f}}_\textbf{\emph{5}})$,
let $\{u_m\}\subset X$ be a $(PS)^{c}$-sequence for (\ref{5.energy1}), $\mathcal{K}$ and $\mathcal{F}$ are defined by
(\ref{defofhuaK}) and (\ref{defofhuaF}) respectively.
 If $\alpha:=\inf\limits_{u\in \mathcal{K}\backslash\{0\}} \|u\|>0$, then either\\

  \textbf{\emph{(i)}} $\limsup \limits_{m\rightarrow\infty} \|u_m\|=0,$ or, \\

 $\textbf{\emph{(ii)}}$ there exist a positive integer $l\leq [\frac{M^2_c}{\alpha^2}]$, points $\bar{u_1},...,\bar{u_l} \in \mathcal{F}\backslash \{0\}$
 ($\bar{u_i}, i=1,...,l$, are not necessarily distinct),
  a subsequence of $\{u_m\}$ (still denoted by $\{u_m\}$) and sequences $\{g_{m}^{i}\}\subset \mathbf{Z}^N$, ($i=1,...,l$) such that
\begin{equation*}
   \lim\limits_{m\rightarrow \infty}  \|u_m - \sum \limits_{i=1}^{l}(g^{i}_{m}\ast \bar{u}_{i})  \|=0,
\end{equation*}
where $[\cdot]$ denotes the integer part of a real number and $(g \ast u)(x):=u(x+g)$ for $g \in \mathbf{Z}^N$.
\end{lemma}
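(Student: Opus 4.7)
The plan is to carry out a bubble-decomposition argument in the spirit of \cite{SK}, adapted to handle the sign-changing nonlinearity with the help of $(\mathbf{f_5})$. Since $\{u_m\}$ is bounded by Lemma \ref{5.PS bounded}, I may pass to a subsequence with $\|u_m\|\to M_c$. Assuming \textbf{(i)} fails, $M_c>0$. First I would rule out vanishing: if $\sup_{y\in\mathbb{R}^N}\int_{B_1(y)}|u_m|^2\to 0$, then by Lions' lemma $u_m\to 0$ in $L^p(\mathbb{R}^N)$ for every $p\in(2,2^*)$, and testing the $(PS)^c$-condition with $\phi=Qu_m-Pu_m$ together with $(\mathbf{f_1})$, $(\mathbf{f_2})$ forces $\|u_m\|\to 0$, contradicting $M_c>0$. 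Hence there exist $\delta>0$, $R>0$ and $\{g_m^{1}\}\subset\mathbb{Z}^N$ with $\int_{B_R(g_m^{1})}|u_m|^2\ge\delta$.

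Next I would extract the first bubble. Since $V$ and $f$ are $\mathbb{Z}^N$-periodic, $\varphi$ is invariant under integer translation, so $v_m^{1}:=((-g_m^{1})\ast u_m)$ is again a $(PS)^c$-sequence; passing to a subsequence, $v_m^{1}\rightharpoonup\bar u_1\neq 0$ in $X$ and a.e.\ in $\mathbb{R}^N$. Weak sequential continuity of $\varphi'$ (cited after (\ref{5.derivative1})) yields $\varphi'(\bar u_1)=0$, so $\bar u_1\in\mathcal{K}\setminus\{0\}$ and $\|\bar u_1\|\ge\alpha$. I then define the residual $u_m^{(1)}:=u_m-g_m^{1}\ast\bar u_1$ and aim to prove the three splitting identities
\begin{equation*}
\|u_m^{(1)}\|^2=\|u_m\|^2-\|\bar u_1\|^2+o(1),\quad \varphi(u_m^{(1)})=\varphi(u_m)-\varphi(\bar u_1)+o(1),\quad \|\varphi'(u_m^{(1)})\|_{X'}\to 0.
\end{equation*}
The norm identity is immediate from the Hilbert structure and the fact that translates $g_m^{1}\ast\bar u_1\rightharpoonup 0$; the energy identity reduces, via Brezis--Lieb, to showing $\int F(x,v_m^{1})-F(x,v_m^{1}-\bar u_1)-F(x,\bar u_1)\,dx\to 0$; the $(PS)$ identity reduces to $\int\bigl[f(x,v_m^{1})-f(x,v_m^{1}-\bar u_1)-f(x,\bar u_1)\bigr]\psi\,dx\to 0$ uniformly on $\|\psi\|\le 1$ (translation of the test function). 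These I treat by the classical Brezis--Lieb/Vitali scheme, splitting the integrand on $\{|\bar u_1|\le\varepsilon_0\}$ versus $\{|\bar u_1|>\varepsilon_0\}$: on the small-$\bar u_1$ part condition $(\mathbf{f_5})$ gives the pointwise bound $|f(x,v_m^{1})-f(x,v_m^{1}-\bar u_1)|\le\bar c|\bar u_1|(1+|v_m^{1}-\bar u_1|^{p-1})$, while on the complement (which has small measure, since $\bar u_1\in L^p$) the subcritical growth $(\mathbf{f_1})$ combined with the boundedness of $v_m^{1}$ in $L^p$ controls the remainder.

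Having verified these identities, I iterate: apply the same non-vanishing dichotomy to $\{u_m^{(1)}\}$. Either $\|u_m^{(1)}\|\to 0$ (and we are done with $l=1$), or we produce $\bar u_2\in\mathcal{F}\setminus\{0\}$ and translations $\{g_m^{2}\}$ with $|g_m^{2}-g_m^{1}|\to\infty$ (forced by $v_m^{1}-\bar u_1\rightharpoonup 0$), and continue. The pairwise divergence $|g_m^{i}-g_m^{j}|\to\infty$ ensures $\langle g_m^{i}\ast\bar u_i,g_m^{j}\ast\bar u_j\rangle\to 0$, so after $l$ steps
\begin{equation*}
\Bigl\|u_m-\sum_{i=1}^{l}g_m^{i}\ast\bar u_i\Bigr\|^2=\|u_m\|^2-\sum_{i=1}^{l}\|\bar u_i\|^2+o(1).
\end{equation*}
Because $\|\bar u_i\|\ge\alpha$ for every $i$ and $\|u_m\|^2\to M_c^2$, the iteration must terminate after at most $l\le[M_c^2/\alpha^2]$ steps, yielding case \textbf{(ii)}.

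The main obstacle, as I see it, is the third identity: showing that $\{u_m^{(1)}\}$ is itself a $(PS)$-sequence. In \cite{SK} the analogous step is facilitated by $F\ge 0$ and an Ambrosetti--Rabinowitz hypothesis, neither of which is available here. The replacement is condition $(\mathbf{f_5})$, but $(\mathbf{f_5})$ only gives the Lipschitz-type bound on a uniform neighbourhood of size $\varepsilon_0$; hence a careful split of the integration domain based on $|\bar u_1|$ (not on $|v_m^{1}-\bar u_1|$, which need not be small pointwise) together with an a.e.\ convergence argument and Vitali's convergence theorem to handle the large-$\bar u_1$ set will be required, uniformly in the test function $\psi$ with $\|\psi\|\le 1$. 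Once this is in hand, the remainder of the argument follows the well-established splitting scheme.
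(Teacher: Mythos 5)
Your overall architecture matches the paper's: extract a nonvanishing translated bubble, show the residual is again a $(PS)$-sequence with strictly smaller (by at least $\alpha^2$) asymptotic squared norm, and iterate at most $[M_c^2/\alpha^2]$ times. Your treatment of the norm identity is in fact more direct than the paper's (which derives $\lim(\|w_m\|^2+\|v\|^2-\|v_m\|^2)=0$ via a chain of variational identities (5.2.10)--(5.2.16), whereas you get it immediately from weak convergence in the Hilbert space). However, your mechanism for the crucial $(PS)$-splitting identity
\[
\int_{\mathbb{R}^N}\bigl(f(x,v_m)-f(x,w_m)-f(x,\bar u_1)\bigr)\psi\,dx\longrightarrow 0\quad\text{uniformly for }\|\psi\|\le 1
\]
differs from the paper's and, as written, has a gap.

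You propose to split the integration domain into $\{|\bar u_1|\le\varepsilon_0\}$ and $\{|\bar u_1|>\varepsilon_0\}$ and then run a Vitali/Brezis--Lieb argument. The problem is that on $\{|\bar u_1|\le\varepsilon_0\}$ the bound you get from $(\mathbf{f_5})$ is $\bar c|\bar u_1|(1+|w_m|^{p-1})|\psi|$, whose integral over that set is merely bounded, not small: $|\bar u_1|$ is only $\le\varepsilon_0$ there, not arbitrarily small, and $\varepsilon_0$ is a fixed constant from $(\mathbf{f_5})$, not a free parameter. To upgrade the a.e.\ convergence of the integrand to $L^1$-convergence you need uniform integrability \emph{and tightness} of the family $\{|\bar u_1|\,|w_m|^{p-1}|\psi|\}$ uniformly in both $m$ and $\psi$ with $\|\psi\|\le 1$; the latter is delicate because $\|\psi\|_{L^p(E)}$ is not uniformly small over the unit ball of $H^1$ as $|E|\to 0$, and a H\"older split of $|\bar u_1|\,|w_m|^{p-1}|\psi|$ into three $H^1$-controlled factors requires exponent room that is not available for all $p\in(2,2^*)$.

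The paper sidesteps all of this by first proving $\bar u_1(x)\to 0$ as $|x|\to\infty$: since $\bar u_1\in\mathcal{K}$ solves $-\Delta v+c(x)v=0$ with $c=V-f(x,v)/v\in L^t_{loc}$ for some $t>N/2$ (using $(\mathbf{f_1})$, $(\mathbf{f_2})$), the Han--Lin elliptic estimate \cite[Theorem 4.1]{Hanqing} gives pointwise decay at infinity. This converts the split into a genuinely geometric one: on $\mathbb{R}^N\setminus B_R$ one has $|\bar u_1|\le\varepsilon$ for any prescribed $\varepsilon$ (not merely $\varepsilon_0$), so the $(\mathbf{f_5})$-bound yields $\le\varepsilon\cdot(\text{bounded quantity})$; on the ball $B_R$ one uses the strong $L^p$-local convergence $w_m\to 0$, $v_m\to\bar u_1$ together with the Nemytskii continuity \cite[Theorem A.2]{MW}. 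This is the step missing from your proposal. The remainder of your argument, including the iteration and termination count, is sound.
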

\begin{proof}
For $u\in X$ we will denote $u^+:=Qu$ and $u^-:=Pu$ for convenience.
By contradiction, if  $\textbf{(i)}$ is not satisfied,
by Lemma \ref{5.PS bounded},  $\{u_m\}$ is also a $(C)^{c}$-sequence, then
  similar to the proof of Theorem 1.5 in \cite{CH} (see also Lemma 1.7 in \cite{SK})  we know  that there exist a weak convergent
 subsequence of $\{u_m\}$ (still denoted by $\{u_m\}$), a sequence $\{a_m\}\subset\mathbb{R}^N$ and two constants $r$, $\eta>0$ such that
\begin{equation*}
    \|u_m\|_{L^{2}(B(a_{m},r))}> \eta,
\end{equation*}
for all $m\in \mathbf{N}$, where $B(a_m,r)=\{x\in \mathbb{R}^N:|x-a_m|\leq r\}$.
We may choose $\{g_{m}\}\subset \mathbf{Z}^N$ and set $v_m:=(g_{m}\ast u_{m})$ such that
\begin{equation*}
     \|v_m\|_{L^{2}(B(0,r+\frac{\sqrt{N}}{2}))}> \eta, \text{ for all } m\in \mathbf{N}.
\end{equation*}
It is easy to see that $\|v_m\|=\|u_m\|$, so, $\{v_{m}\}$ is also bounded in $X$ and there is a subsequence (still denoted by $\{v_{m}\}$) converges to some $v\in X$ both weakly in $X$ and strongly in $L^{s}_{loc}(\mathbb{R}^N)$ for $s\in [2,2^*)$. Therefore
\begin{equation*}
    v\in \mathcal{K}\backslash \{0\}.
\end{equation*}
Let $w_m:=v_m-v$, we claim that
\begin{equation}\label{5.2.1}
   \lim\limits_{m\rightarrow\infty} \varphi'(w_m)= 0,
\end{equation}
and
\begin{equation}\label{5.2.2}
    \limsup\limits_{m\rightarrow\infty}\|w_{m}\|^{2} \leq M_{c}^2-\|v\|^{2}.
\end{equation}
We  prove (\ref{5.2.1}) first.  Let $\langle\cdot,\cdot\rangle$ be the inner product defined by (\ref{defofnewnorm}),
it follows from (\ref{5.derivative2}) that, for any $h\in X$ with $\|h\|=1$,
  \begin{equation}\label{5.2.3}
    \varphi'(w_m) h =\langle (v^{+}_{m}-v^{-}_{m}),h \rangle-\langle (v^+ - v^-),h \rangle-\int_{\mathbb{R}^{N}}f(x,w_m)h(x) dx.
  \end{equation}
Since $ \varphi'(v)=0$ and  $\varphi'(v_m) \overset{m}{\rightarrow} 0 $ in $X'$, these imply that
  \begin{equation}\label{5.2.4}
    \langle (v^+ - v^-),h \rangle=\int_{\mathbb{R}^{N}}f(x,v)h(x) dx,
  \end{equation}
  and
  \begin{equation}\label{5.2.5}
  \langle (v_m^+ - v_m^-),h \rangle=\int_{\mathbb{R}^{N}}f(x,v_m)h(x) dx + o(1), \quad \text{as} \quad m\rightarrow\infty.
  \end{equation}
  Then, (\ref{5.2.3}) together with (\ref{5.2.4}) and (\ref{5.2.5}) gives that
  \begin{equation*}
     \varphi'(w_m)h =\int_{\mathbb{R}^{N}}\big(f(x,v_m)-f(x,w_m)-f(x,v)\big)h(x) dx + o(1), \quad \text{as} \quad m\rightarrow\infty.
  \end{equation*}
 So, we only need to show that
\begin{equation*}
   \int_{\mathbb{R}^{N}} |\big(f(x,v_m)-f(x,w_m)-f(x,v)\big)h(x)| dx \overset{m}{\rightarrow} 0, \text{ for } \|h\|=1 \text{ uniformly}.
\end{equation*}

Since $v$ is a solution of (\ref{5.equation}), we have $-\Delta v +c(x)v=0$, where $c(x)=V(x)-\frac{f(x,v)}{v}$. It follows from $(\mathbf{f}_\mathbf{1})$
and $(\mathbf{f}_\mathbf{2})$ that there exists a $\delta>0$ such that
 \begin{equation}\label{5.2.6}
    |f(x,u)|\leq \delta|u|+c_{\delta}|u|^{p-1},
 \end{equation}
 thus $c(x) \in L_{loc}^{t}({\mathbb{R}^N})$ for some $t>\frac{N}{2}$. By Theorem 4.1 of \cite{Hanqing},
 we have $v(x)\rightarrow 0$ as $|x|\rightarrow\infty$.
Then, for any $ \varepsilon >0$ and $R>0$, $B_R=\{x \in \mathbb{R}^N:|x|<R\}$ with $R$ large enough,
by $(\mathbf{f}_\mathbf{5})$ and (\ref{5.2.6}) there holds
\begin{eqnarray}
 \nonumber  & & \int_{\mathbb{R}^{N}\backslash B_R} | \big(f(x,v_m)-f(x,w_m)-f(x,v)\big)h(x) |dx \\
 \nonumber  &\leq &  \int_{\mathbb{R}^{N}\backslash B_R} \bar{c}|v|(1+|w_m|^{p-1})h +C \int_{\mathbb{R}^{N}\backslash B_R} |v||h|\\
   &\leq &  C|v|^2 \|h\|^2+C|v| \|w_m\| ^{p-1} \|h\|    \leq\frac{\varepsilon}{2}. \label{5.2.7}
\end{eqnarray}
 Moreover, since
both $ w_m \overset{m}{\rightarrow} 0 $ in $L^p(B_R)$ and $ v_m \overset{m}{\rightarrow} v$ in $L^p(B_R)$, it follows from
 Theorem A.2 of \cite{MW} that
\begin{eqnarray}
\nonumber   & &  \int_{B_R} | \big(f(x,v_m)-f(x,w_m)-f(x,v)\big)h(x) |dx \\
   &\leq &    \int_{B_R} | \big(f(x,v_m)-f(x,v)\big)||h(x) |dx  + \int_{B_R} |f(x,w_m)||h(x) | \leq\frac{\varepsilon}{2}, \label{5.2.8}
\end{eqnarray}
for $m$ large enough. Since $\varepsilon$ is arbitrary,   (\ref{5.2.7}) and (\ref{5.2.8}) show that
\begin{equation}\label{5.2.9}
 \int_{\mathbb{R}^{N}} |\big(f(x,v_m)-f(x,w_m)-f(x,v)\big)h(x)| dx \overset{m}{\rightarrow} 0,
\end{equation}
uniformly for $\|h\|=1$. Hence (\ref{5.2.1}) is proved.

 Next we  prove (\ref{5.2.2}). First from
 $\varphi'(v)=0 $, $\varphi'(v_m)\overset{m}{\longrightarrow}0 $, $\varphi'(w_m)\overset{m}{\longrightarrow}0  $ in $X'$ and the boundedness of
 $\|v_m\| $ and  $ \|w_m\|$ we have
 $$ \|v\|^2= \int_{\mathbb{R}^{N}} f(x,v)(v^+- v^-), $$
 $$ \|v_m\|^2= \int_{\mathbb{R}^{N}} f(x,v_m)(v_m^+- v_m^-)+o(1), $$
 $$ \|w_m\|^2= \int_{\mathbb{R}^{N}} f(x,w_m)(w_m^+- w_m^-)+o(1). $$
So,
\begin{eqnarray}
  \nonumber  & & \|w_m\|^2+ \|v\|^2-\|v_m\|^2\\
 \nonumber  &= &\int_{\mathbb{R}^{N}} f(x,w_m)(v_m^+- v_m^-)-\int_{\mathbb{R}^{N}} f(x,w_m)(v ^+- v ^-)\\
& & +\int_{\mathbb{R}^{N}} f(x,v)(v^+- v^-)-\int_{\mathbb{R}^{N}} f(x,v_m)(v_m^+- v_m^-)+o(1). \label{5.2.10}
\end{eqnarray}
Since  $v_m \overset{m}{\rightharpoonup}v$ weakly in $X$, we have $w_m \overset{m}{\rightharpoonup}0$  weakly in $X$ and thus
\begin{equation}\label{5.2.11}
    \langle w_m, v^{\pm}\rangle   \rightarrow  0 \quad \text{as} \quad m\rightarrow\infty.
\end{equation}
By $\langle \varphi'( w_m) , v^{\pm}\rangle \overset{m}{\longrightarrow}0$, we have
\begin{equation*}\label{5.2.12}
     \langle w_m, v^+\rangle=\int_{\mathbb{R}^{N}} f(x,w_m) v^+  +o(1),
\end{equation*}
and
\begin{equation*}\label{5.2.13}
      -\langle w_m, v^-\rangle=\int_{\mathbb{R}^{N}} f(x,w_m) v^-  +o(1).
\end{equation*}
These together with (\ref{5.2.11}) imply that
\begin{equation}\label{5.2.14}
     \lim\limits_{m\rightarrow\infty}  \int_{\mathbb{R}^{N}} f(x,w_m) v^{\pm}  =0.
\end{equation}
By the boundedness of $\|w_m\|$, we have $\varphi'(v)w_m^{\pm} =0$, so,
\begin{equation}\label{5.2.15}
    \langle v, w_m^+ \rangle= \int_{\mathbb{R}^{N}} f(x,v) w_m^{+} \quad\text{ and } \quad -\langle v, w_m^- \rangle= \int_{\mathbb{R}^{N}} f(x,v) w_m^{-}.
\end{equation}
Since   $w_m \overset{m}{\rightharpoonup}0$ weakly in $X$,  then $w_m^{\pm} \overset{m}{\rightharpoonup}0$ weakly in $X^{\pm}$, and (\ref{5.2.15}) shows that
\begin{equation*}
    \int_{\mathbb{R}^{N}} f(x,v) w_m^{+} = \int_{\mathbb{R}^{N}} f(x,v) (v_m^{+}-v^+)\rightarrow 0 \quad \text{as} \quad m\rightarrow\infty,
\end{equation*}
and
\begin{equation*}
  \int_{\mathbb{R}^{N}} f(x,v) w_m^{-}= \int_{\mathbb{R}^{N}} f(x,v) (v_m^{-}-v^-) \rightarrow 0\quad  \text{as} \quad  m\rightarrow\infty.
\end{equation*}
So,
\begin{equation}\label{5.2.16}
    \int_{\mathbb{R}^{N}} f(x,v)(v_m^+ - v_m^-)=  \int_{\mathbb{R}^{N}} f(x,v)(v ^+ - v ^-)+o(1).
\end{equation}
Then, it follows from (\ref{5.2.10}), (\ref{5.2.14}) and (\ref{5.2.16}) that
\begin{eqnarray*}
  \nonumber  & & \|w_m\|^2+ \|v\|^2-\|v_m\|^2\\
         &= &\int_{\mathbb{R}^{N}} \Big(f(x,w_m)+f(x,v)-f(x,v_m)\Big)(v_m^+- v_m^-) +o(1)
\end{eqnarray*}
By the boundedness of $\{\|v_m\|\}$  and using (\ref{5.2.9}) again, we have
\begin{equation*}
    \lim\limits_{m\rightarrow\infty} \{\|w_m\|^2+ \|v\|^2-\|v_m\|^2\}=0,
\end{equation*}
which leads to (\ref{5.2.2}) by  using the definition of $M_c$.

Since $\|v\|\leq M_c$ (by the weak lower semicontinuity of norms), there are two possibilities which may occur:
\begin{itemize}
  \item If $\|v\|=M_c$, then  (\ref{5.2.2}) implies that $w_m\overset{m}{\rightarrow} 0$ strongly in $X$, that is  $\mathbf{(ii)}$ holds with $l=1$
               and $\bar{u}_{1}=v$.
  \item If $\|v\|<M_c$,  we may go back to the beginning of our proof by simply replacing   $\{u_m\}$ and $M_c^2$   by $\{w_m\}$ and $M_c^2-\|v\|^2$ respectively.
   Then we can set $\bar{u}_{2}\in \mathcal{K}$ with $\alpha^2  \leq \|\bar{u}_{2}\|^2\leq M_c^2-\alpha^2$.
    Repeat this procedure    at most $[\frac{M_c^2}{\alpha^2}]$ times, we obtain the conclusion.
\end{itemize}
\end{proof}

Let  $l\in \mathbf{N}$ and $\mathcal{A}\subset X$ be a finite set, i.e., $\mathcal{A}$ contains finite number  of elements, define
\begin{equation*}
    [\mathcal{A},l]:=\Big\{  \sum \limits_{i=1}^{j}(g_{i}\ast  v _{i}) :    1\leq j\leq l, g_{i} \in \mathbf{Z}^{N}, v_{i} \in \mathcal{A}  \Big\}.
\end{equation*}
Then, it  follows   from \cite{V-Rabinowitz2}, see also  \cite{V-Rabinowitz1, SK}, that

\begin{lemma}\label{5.lemma2.5}
\rm{(\cite{V-Rabinowitz2}, Proposition 2.57)} For any $l \in \mathbf{N}$, if  $\mathcal{A}\subset X$ is  a finite set, then
\begin{equation*}
     \inf \{ \|v-v'\|: v, v' \in [\mathcal{A},l], v\neq v'    \}>0.
\end{equation*}
\hfill{$\Box$}
\end{lemma}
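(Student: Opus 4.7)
The plan is to argue by contradiction. Suppose there exist sequences $\{v_m\}, \{v_m'\} \subset [\mathcal{A}, l]$ with $v_m \neq v_m'$ for all $m$ and $\|v_m - v_m'\| \to 0$; I will derive $v_m = v_m'$ for large $m$, a contradiction. The two main tools are the finiteness of $\mathcal{A}$ together with the bound on the number of summands (which lets me stabilize the combinatorial structure by successive subsequence extractions) and the fact that integer translations are isometries of $\|\cdot\|$, since $V$ is $\mathbf{Z}^N$-periodic so $|L|^{1/2}$ commutes with translations by elements of $\mathbf{Z}^N$.

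First I would write the difference in the normalized form $v_m - v_m' = \sum_{k=1}^{K} \epsilon_k (h_{m,k} \ast u_k)$, where the signs $\epsilon_k \in \{-1, +1\}$, the functions $u_k \in \mathcal{A}$, and the total number $K \leq 2l$ of summands are held constant along a subsequence, while only the integer translations $h_{m,k} \in \mathbf{Z}^N$ depend on $m$. This reduction is legitimate because $\mathcal{A}$ is finite and the number of summands in each element of $[\mathcal{A}, l]$ is bounded by $l$.

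Next I would carry out a clustering extraction. For every pair $(k, k')$ the sequence $h_{m,k} - h_{m,k'}$ in $\mathbf{Z}^N$ is either bounded or has a subsequence whose modulus tends to infinity. A diagonal argument yields a further subsequence on which, for every pair, either $h_{m,k} - h_{m,k'}$ is eventually constant or $|h_{m,k} - h_{m,k'}| \to \infty$. Declaring $k \sim k'$ in the first case partitions $\{1, \dots, K\}$ into clusters $C_1, \dots, C_s$. Choosing a representative $k_\alpha \in C_\alpha$ and writing $d_k := h_{m,k} - h_{m,k_\alpha}$ (a constant vector for $k \in C_\alpha$ after extraction), the element
\[
\Phi_\alpha := \sum_{k \in C_\alpha} \epsilon_k (d_k \ast u_k) \in X
\]
is independent of $m$, and one obtains the decomposition $v_m - v_m' = \sum_{\alpha=1}^{s} (h_{m,k_\alpha} \ast \Phi_\alpha)$.

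Finally, expanding the squared norm and using that $\|g \ast \Phi\| = \|\Phi\|$ for every $g \in \mathbf{Z}^N$ gives
\[
\|v_m - v_m'\|^2 = \sum_{\alpha=1}^{s} \|\Phi_\alpha\|^2 + \sum_{\alpha \neq \beta} \big\langle \Phi_\alpha,\, (h_{m,k_\beta} - h_{m,k_\alpha}) \ast \Phi_\beta \big\rangle.
\]
For $\alpha \neq \beta$ the separating shift forces $(h_{m,k_\beta} - h_{m,k_\alpha}) \ast \Phi_\beta \rightharpoonup 0$ weakly in $H^1(\mathbb{R}^N)$, hence also weakly in $(X, \langle \cdot, \cdot \rangle)$ since the two norms are equivalent; each cross term therefore tends to zero, so $\|v_m - v_m'\|^2 \to \sum_\alpha \|\Phi_\alpha\|^2$. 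The hypothesis $\|v_m - v_m'\| \to 0$ then forces every $\Phi_\alpha = 0$, and the decomposition above gives $v_m - v_m' = 0$, contradicting $v_m \neq v_m'$. The main obstacle is the diagonal clustering step: making the relative translations simultaneously stable for all pairs $(k, k')$ requires care, but once that is achieved the norm expansion and the vanishing of cross terms are routine. $\hfill\Box$
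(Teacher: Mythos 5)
Your proof is correct. The paper provides no proof of its own here, only a citation to Zelati--Rabinowitz (1992), Proposition 2.57; your argument---stabilize the combinatorial data ($K$, signs, and elements of $\mathcal{A}$) by a finite pigeonhole extraction, cluster the indices by whether the relative shifts $h_{m,k}-h_{m,k'}$ are eventually constant or diverge, and then expand $\|v_m-v_m'\|^2$ using the $\mathbf{Z}^N$-translation invariance of $\|\cdot\|$ (which you correctly derive from the periodicity of $V$, hence of $|L|^{1/2}$) together with the weak vanishing of far-separated cross terms---is exactly the standard concentration-compactness argument used in that reference.
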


We mention that the sets $[\mathcal{A},l]$ defined above are closed related to the so-called $(PS)$-attractors or  $(C)$-attractors
(see \cite{BartschDingNon-Metrizable}).
By Lemmas \ref{5.PS structure}, for $\mathcal{F}$  defined by (\ref{defofhuaF}) and $\varphi$ defined by (\ref{5.energy2}),
 we have the following lemma

\begin{lemma}\label{5.lemma2.6}
Under  conditions $(\textbf{\emph{V}})$ and  $(\textbf{\emph{f}}_\textbf{\emph{1}})$-$(\textbf{\emph{f}}_\textbf{\emph{5}})$,
if $\mathcal{F}$  is a finite set and
 $\{u_m\}\subset X $ is a $(PS)^c$-sequence of $\varphi$, then there exists $l_{c} \in \mathbf{N}$, which depends only on $c$, such that
\begin{equation*}
    0\leq \|u_m  - [\mathcal{F},l_c] \|_{\tau} \leq \|u_m  - [\mathcal{F},l_c] \| \rightarrow 0 \quad \text{as}  \quad   m\rightarrow \infty.
\end{equation*}
\hfill{$\Box$}
\end{lemma}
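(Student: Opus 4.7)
The plan is to reduce Lemma \ref{5.lemma2.6} to Lemma \ref{5.PS structure} after first turning the bound $l\le[M_c^2/\alpha^2]$ produced by that lemma into a constant independent of the chosen $(PS)^c$-sequence. Since the norm $\|\cdot\|$ is $\mathbf{Z}^N$-translation invariant (as $V$ is periodic) and $\mathcal{F}$ is finite, every translate-orbit in $\mathcal{K}\setminus\{0\}$ has constant norm, so
\[
\alpha:=\inf_{u\in\mathcal{K}\setminus\{0\}}\|u\|=\min_{v\in\mathcal{F}\setminus\{0\}}\|v\|>0.
\]
Lemma \ref{5.PS bounded} supplies a constant $C=C(c)$ bounding every $(PS)^c$-sequence, so $M_c\le C$ regardless of the sequence, and I set $l_c:=[C^2/\alpha^2]$.

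The inequality $\|u-w\|_\tau\le\|u-w\|$ is immediate from the definition of $\|\cdot\|_\tau$, so taking the infimum over $w\in[\mathcal{F},l_c]$ reduces the lemma to showing $\mathrm{dist}(u_m,[\mathcal{F},l_c])\to 0$ in the $X$-norm. I argue by contradiction: if this fails, there exist $\delta>0$ and a subsequence, still written $\{u_m\}$, with $\mathrm{dist}(u_m,[\mathcal{F},l_c])\ge\delta$. Since the subsequence remains a $(PS)^c$-sequence, I apply Lemma \ref{5.PS structure} to it and examine the two alternatives.

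Alternative \textbf{(i)} of Lemma \ref{5.PS structure} gives a further subsequence with $\|u_m\|\to 0$; because $0\in\mathcal{K}$ yields $0\in\mathcal{F}\subset[\mathcal{F},l_c]$, this forces $\mathrm{dist}(u_m,[\mathcal{F},l_c])\le\|u_m\|\to 0$, contradicting $\mathrm{dist}\ge\delta$. Alternative \textbf{(ii)} provides, along a further subsequence, $\bar{u}_1,\dots,\bar{u}_l\in\mathcal{F}\setminus\{0\}$ with $l\le[M_c^2/\alpha^2]\le l_c$ and translates $g_m^i\in\mathbf{Z}^N$ such that $\sum_{i=1}^l g_m^i\ast\bar{u}_i\in[\mathcal{F},l_c]$ and $\|u_m-\sum_{i=1}^l g_m^i\ast\bar{u}_i\|\to 0$, again contradicting $\mathrm{dist}\ge\delta$. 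The only delicate point in the plan is replacing the sequence-dependent bound $[M_c^2/\alpha^2]$ by the uniform $l_c$, which is handled once and for all by Lemma \ref{5.PS bounded}; beyond that the argument is essentially bookkeeping and I foresee no genuine obstacle.
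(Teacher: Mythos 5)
Your proposal is correct and fills in the proof that the paper leaves implicit (the lemma is stated with $\Box$, being presented as a direct consequence of Lemma \ref{5.PS structure}). You correctly identify the two points that need attention — verifying $\alpha>0$ from finiteness of $\mathcal{F}$ together with $\mathbf{Z}^N$-invariance of $\|\cdot\|$, and upgrading the sequence-dependent bound $[M_c^2/\alpha^2]$ to a uniform $l_c$ via Lemma \ref{5.PS bounded} — and the subsequence/contradiction bookkeeping, including the observation that $0\in\mathcal{F}\subset[\mathcal{F},l_c]$ handles alternative \textbf{(i)}, is exactly the intended argument.
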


Let
\begin{equation*}
    B_{X^+}(z,r):= B(z,r)\cap X^+,~ B(z,r):=\{u\in X : \|u-z\|<r\},
\end{equation*}
then it follows from Lemmas \ref{5.PS structure}-\ref{5.lemma2.6} that

\begin{lemma}\label{5.lemma 2.7}
If conditions $(\textbf{\emph{V}})$ and  $(\textbf{\emph{f}}_\textbf{\emph{1}})$-$(\textbf{\emph{f}}_\textbf{\emph{5}})$ hold,
$\mathcal{F}$ is a finite set,
 then, for any $c\in \mathbb{R}$ and
$s \in (0,\frac{\mu}{4})$ with
\begin{equation*}
   \mu= \inf\{\|v-v'\|:v,v' \in [Q\mathcal{F},l_c], v\neq v'\},
\end{equation*}
where $l_{c} \in \mathbf{N}$ is given in Lemma \ref{5.PS structure} and depends only on  $c$, there exists   $\sigma>0$    such that
\begin{equation*}
   \| \varphi'(u)\|_{X'} >\sigma   \text{ for any }   u \in \varphi^{c}\backslash \big( \bigcup\limits_{z \in [Q\mathcal{F},l_{c}] }  X^{-} \oplus B_{ X^{+} }(z,s) \big), ~\varphi^{c}:=\{u\in X: \varphi(u)\leq c\}.
\end{equation*}

\end{lemma}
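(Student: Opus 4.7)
\medskip
\noindent\textbf{Proof plan for Lemma \ref{5.lemma 2.7}.}
The natural strategy is a proof by contradiction that feeds directly into Lemma \ref{5.lemma2.6}. Assume the conclusion fails; then for every integer $n \geq 1$ we can pick
\[
u_{n} \in \varphi^{c}\setminus\Big(\bigcup_{z\in[Q\mathcal{F},l_{c}]} X^{-}\oplus B_{X^{+}}(z,s)\Big),
\qquad \|\varphi'(u_{n})\|_{X'} < \tfrac{1}{n}.
\]
Since $\sup_{n}\varphi(u_{n})\leq c$ and $\|\varphi'(u_{n})\|_{X'}\to 0$, the sequence $\{u_{n}\}$ is a $(PS)^{c}$-sequence for $\varphi$. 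Because $\mathcal{F}$ is finite by hypothesis, Lemma \ref{5.lemma2.6} applies, producing (after passing to a subsequence) a sequence $v_{n}\in[\mathcal{F},l_{c}]$ with $\|u_{n}-v_{n}\|\to 0$.

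The key additional observation is that the orthogonal projection $Q:X\to X^{+}$ commutes with $\mathbf{Z}^{N}$-translations. This follows from the periodicity hypothesis $(\mathbf{V})$: the operator $L=-\Delta+V$ commutes with the unitary action $u\mapsto g\ast u$ for every $g\in\mathbf{Z}^{N}$, hence so does $|L|^{1/2}$ and, consequently, the spectral projections onto $X^{+}$ and $X^{-}$. Writing $v_{n}=\sum_{i=1}^{j_{n}} g_{i,n}\ast \bar{u}_{i,n}$ with $\bar{u}_{i,n}\in\mathcal{F}$, $g_{i,n}\in\mathbf{Z}^{N}$ and $1\leq j_{n}\leq l_{c}$, this commutation yields
\[
Qv_{n}=\sum_{i=1}^{j_{n}} g_{i,n}\ast (Q\bar{u}_{i,n}) \in [Q\mathcal{F},l_{c}].
\]

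To close the argument, note that $Q$ is a contraction with respect to $\|\cdot\|$, so
\[
\|Qu_{n}-Qv_{n}\| \leq \|u_{n}-v_{n}\| \xrightarrow{n\to\infty} 0.
\]
On the other hand, the choice of $u_{n}$ outside the tube $X^{-}\oplus B_{X^{+}}(Qv_{n},s)$ (which is legal since $Qv_{n}\in [Q\mathcal{F},l_{c}]$) forces $\|Qu_{n}-Qv_{n}\|\geq s>0$ for every $n$. This contradicts the convergence above, completing the proof.

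\medskip
\noindent\textbf{Main obstacle.} The only nontrivial point is verifying that $Qv_{n}$ lies in $[Q\mathcal{F},l_{c}]$, i.e., that $Q$ commutes with the $\mathbf{Z}^{N}$-action; once this is in place, Lemma \ref{5.lemma2.6} and the definition of the tubes do all the work. The bound $s<\mu/4$ does not enter the proof of this lemma itself; it is reserved for the subsequent deformation arguments, where it ensures that the tubes around distinct points of $[Q\mathcal{F},l_{c}]$ are well-separated.
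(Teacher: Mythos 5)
Your proposal is correct and follows essentially the same route as the paper: a contradiction argument that produces a $(PS)^c$-sequence outside the tubular neighborhoods, invokes Lemma \ref{5.lemma2.6} to approximate it by elements of $[\mathcal{F},l_c]$, and then uses the commutation of $Q$ with the $\mathbf{Z}^N$-action (the paper cites Remark 1.1(iv) of Kryszewski--Szulkin for $Q[\mathcal{F},l_c]=[Q\mathcal{F},l_c]$, which you rederive from spectral considerations) to contradict the standing distance $s>0$. Your closing observation that the constraint $s<\mu/4$ plays no role in this proof, only later, is also accurate.
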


\begin{proof}
By contradiction, if  such $\sigma$ does not exist, then there exists a sequence
\begin{equation}\label{5.lemma 2.7.1}
    \{u_m\}\subset \varphi^{c}\backslash \big( \bigcup\limits_{z \in [Q\mathcal{F},l_{c}] }  X^{-} \oplus B_{ X^{+} }(z,s) \big),
     \text{ for some } s \in (0,\frac{\mu}{4}),
\end{equation}
such that $  \varphi'(u_m)  \overset{m}{\rightarrow} 0$ in $X'$. Then,  Lemma \ref{5.lemma2.6} implies that
\begin{equation*}
    \|u_m  - [\mathcal{F},l_c] \| \rightarrow 0 \quad \text{as}  \quad   m\rightarrow \infty.
\end{equation*}
So,
\begin{equation*}
      \|Qu_m  - Q[\mathcal{F},l_c] \| \rightarrow 0 \quad \text{as}  \quad   m\rightarrow \infty.
\end{equation*}
On the other hand, by \cite[Remark 1.1 (iv)]{SK} $,  Q$ and  $\ast$ are commutable, thus
$$ Q[\mathcal{F},l_c]=[Q \mathcal{F},l_c].$$
So,
\begin{equation*}
      \|Qu_m  -[Q \mathcal{F},l_c] \| \rightarrow 0 \quad \text{as}  \quad   m\rightarrow \infty,
\end{equation*}
which contradicts (\ref{5.lemma 2.7.1}). The proof is complete.

\end{proof}

\section{Two deformation lemmas}
~

In this section, two deformation lemmas will be proved and we stress  that the $\tau$-upper semi-continuity of $\varphi$ is not needed.
Assume that (\ref{5.equation}) has only finite  geometrically distinct solutions, i.e., $\mathcal{F}$ is finite, then
\begin{equation}\label{5.3.1}
    \delta:=\sup\{\|Qu\| :u\in \mathcal{K}\}<+\infty.
\end{equation}
By Lemma \ref{5.lemma2.1} \textbf{(iii)}, we have
\begin{equation}\label{5.3.2}
    \zeta:=\sup\limits_{\|Qu\| \leq    \delta+1}\varphi (u) <+\infty.
\end{equation}
Let
$$\Sigma := \{A\subset X: A \text{ is closed and } A=-A\},$$
and
\begin{equation}\label{defofsigematil}
   \tilde{\Sigma} := \{A\in \Sigma: A \text{  is bounded and } \tau\text{-compact}\}.
\end{equation}
Since $X$ is a Hilbert space, let $\nabla\varphi$ be given by the formula
$$  \langle \nabla\varphi(u),v \rangle= \varphi'(u)v, \text{ for all } v \in X,  $$
then we have
\begin{lemma}\label{5.vectorfield1}
 Under conditions $(\textbf{\emph{V}})$, $(\textbf{\emph{f}}_\textbf{\emph{1}})$-$(\textbf{\emph{f}}_\textbf{\emph{5}})$, let
 $\mathcal{F}$ be a  finite set and let $M \in\tilde{\Sigma} $.  Denote
 \begin{equation*}
    \bar{\beta} := \sup\limits_{u\in M} \varphi(u) \text{ and }  \beta:= \inf\limits_{u\in M} \varphi(u).
 \end{equation*}
 If  $\bar{\beta} > \zeta+1$, $\zeta$  is defined in  (\ref{5.3.2}),
then there exists  a vector field:
$$\chi_1(u): \ \varphi^{\bar{\beta}}_{\beta-3}\rightarrow X \text{ with } \varphi^{\bar{\beta}}_{\beta-3}:=\{u\in X: \beta-3\leq\varphi(u)\leq \bar{\beta}\}$$
 such that
\begin{description}
  \item[(i)] $\chi_1(u)$ is locally Lipschitz continuous   and $\tau$-locally Lipschitz $\tau$-continuous on $\varphi^{\bar{\beta}}_{\beta-3}$.
  \item[(ii)] $\chi_1(u)$ is odd with    $-3\leq\langle\nabla\varphi(u),\chi_{1}(u)\rangle \leq 0$,  for  any $ u\in  \varphi^{\bar{\beta}}_{\beta-3}$.

  \item[(iii)] $\langle \nabla\varphi(u),\chi_1(u)\rangle  <-1$  for any   $u\in E_1:= \varphi^{\bar{\beta}}_{\beta-3}\backslash \{u\in X:\|Qu\| <  \delta+1\}$, for $\delta$ defined by (\ref{5.3.1}).
\item[(iv)] There exists $\sigma_1>0$ such that
$$\|\chi_1(u)\|<\sigma_1 \text{ for any } u\in \varphi_{\beta-3}^{\bar{\beta}}\backslash \big( \bigcup\limits_{z \in [Q\mathcal{F},l_{\bar{\beta}}] }  X^{-} \oplus B_{ X^{+} }(z,\frac{\mu}{4}) \big)$$
where $\mu$ and $l_{\bar{\beta}}$ are given by Lemma \ref{5.lemma 2.7}.
\item[(v)] Each $u\in \varphi^{\bar{\beta}}_{\beta-3}$ has  a $\tau$-open neighborhood, $U_{u}\subset X$,
of $u$ such that   $\chi_1(U_{u}\cap\varphi^{\bar{\beta}}_{\beta-3})$ is contained in a finite-dimensional subspace of $X$.
\end{description}

\end{lemma}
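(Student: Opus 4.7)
The plan is to build $\chi_1$ as a pseudogradient vector field adapted to the $\tau$-topology, with a cut-off scaling on $E$ producing the stronger descent in (iii). First I would gather the facts about $\|\nabla\varphi\|_{X'}$ needed for the calibration. On $E$, every $u$ satisfies $\|Qu\|\ge\delta+1$, while every $u\in\mathcal{K}$ has $\|Qu\|\le\delta$, so $\nabla\varphi(u)\ne 0$ on $E$. Lemma \ref{5.lemma 2.7} supplies a uniform bound $\|\nabla\varphi(u)\|_{X'}\ge\sigma>0$ on $\varphi^{\bar\beta}_{\beta-3}$ outside the tubes $T:=\bigcup_{z\in[Q\mathcal{F},l_{\bar\beta}]}(X^-\oplus B_{X^+}(z,\mu/4))$, which will provide the norm bound needed in (iv).

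Next I would construct local vectors. For each $u\in\varphi^{\bar\beta}_{\beta-3}$ with $\nabla\varphi(u)\ne 0$, decompose $\nabla\varphi(u)=Qu-Pu-K(u)$ where $K(u)\in X$ is the Hilbert representative of $\phi\mapsto\int_{\mathbb R^N}f(x,u)\phi\,dx$; by $(\textbf{f}_\textbf{1})$, $(\textbf{f}_\textbf{2})$, Rellich compactness and Theorem A.2 of \cite{MW}, $K:X\to X$ is $\tau$-to-strong continuous on bounded sets. Choose $y_u$ in a finite-dimensional subspace $Y_u\subset X$ containing $Qu$ and a norm-approximant of $K(u)$, with $\|y_u\|\le 1$ and $\langle\nabla\varphi(u),y_u\rangle\ge\tfrac14\|\nabla\varphi(u)\|_{X'}$. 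Because $v\mapsto\langle\nabla\varphi(v),y_u\rangle=\langle Qv-Pv,y_u\rangle-\langle K(v),y_u\rangle$ is $\tau$-continuous on bounded sets (weak-$X^-$ pairing against the fixed vector $y_u$, strong-$X^+$ convergence, and $\tau$-to-strong continuity of $K$), this descent estimate persists on a $\tau$-open neighborhood $U_u$ of $u$. Fix a Lipschitz cut-off $\eta:[0,\infty)\to[0,1]$ with $\eta\equiv 1$ on $[\delta+1,\infty)$, $\eta\equiv 0$ on $[0,\delta+\tfrac12]$, and set
\begin{equation*}
w_u(v)=-\bigl(\gamma_u\,\eta(\|Qv\|)+\lambda_u(1-\eta(\|Qv\|))\bigr)\,y_u,
\end{equation*}
choosing $\gamma_u>0$ large enough that $\langle\nabla\varphi(v),w_u(v)\rangle\le -2$ on $U_u\cap E$ (possible since $\nabla\varphi\ne 0$ there) and $\lambda_u\in[0,3/\|\nabla\varphi(u)\|_{X'}]$ so that the total descent lies in $[-3,0]$. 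Outside $T$ the uniform bound $\sigma$ makes $\gamma_u,\lambda_u$ uniformly bounded, yielding (iv).

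The global field is then produced by a $\tau$-locally-finite, $\tau$-Lipschitz partition of unity $\{\psi_\alpha\}$ subordinate to $\{U_{u_\alpha}\}$; such a partition exists because the $\tau$-topology is metrizable on $\|\cdot\|$-bounded subsets via $\|\cdot\|_\tau$. Put $\tilde\chi(v)=\sum_\alpha\psi_\alpha(v)w_{u_\alpha}(v)$ and symmetrize $\chi_1(v)=\tfrac12(\tilde\chi(v)-\tilde\chi(-v))$; this is meaningful because $(\textbf{f}_\textbf{4})$ makes $\varphi$ even and $\nabla\varphi$ odd, so the descent estimates transfer under the reflection $v\mapsto -v$. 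Properties (i)--(v) follow by convex-combination arguments from the per-patch estimates: (ii) and (iii) from the calibrated descent, (iv) from the uniform bound outside $T$, and (v) because each $w_{u_\alpha}(v)\in Y_{u_\alpha}$ lies in a fixed finite-dimensional subspace and the partition is $\tau$-locally finite.

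The main obstacle is the simultaneous requirement of (i) and (v): producing a field that is $\tau$-locally Lipschitz with finite-dimensional image on $\tau$-neighborhoods. This is feasible only because $K$ is compact, so the finite-dimensional approximation of $\nabla\varphi(u)$ used to select $y_u\in Y_u$ preserves $\tau$-continuity of the pairing $\langle\nabla\varphi(\cdot),y_u\rangle$ on bounded sets; without the compactness, no finite-dimensional $y_u$ would yield a descent direction that persisted in a $\tau$-neighborhood. A secondary delicate point is the joint calibration that delivers (ii) on all of $\varphi^{\bar\beta}_{\beta-3}$ and (iii) on $E$ simultaneously, settled by the Lipschitz cut-off $\eta(\|Qv\|)$, which is $\tau$-Lipschitz since $Q$ is strongly continuous on $X^+$.
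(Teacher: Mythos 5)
Your construction follows the same global strategy as the paper's proof: assign a local descent vector on a $\tau$-neighborhood of each point, glue with a $\tau$-locally-finite, $\tau$-Lipschitz partition of unity, truncate via a cut-off in $\|Qu\|$, and symmetrize. But there are two genuine problems in the way you set it up.

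First, your covering $\{U_{u_\alpha}\}$ is only defined for $u$ with $\nabla\varphi(u)\ne 0$, so it omits the critical set $\mathcal{K}\cap\varphi^{\bar\beta}_{\beta-3}$ (all of whose elements satisfy $\|Qu\|\le\delta$). A partition of unity cannot be subordinated to a family that fails to cover the domain, and the cut-off $\eta$ does not repair this: $\eta$ controls which patches contribute nontrivially, but one still needs patches to contribute something (even zero) at every point. The paper fixes this by adjoining the single $\tau$-open set $\{u\in X:\|Qu\|<\delta+1\}$ to the cover, assigning the zero vector there. You need to add an analogous patch. Second, your assertion that $K:X\to X$ is $\tau$-to-strong continuous on bounded sets is not justified and is in fact false on $\mathbb{R}^N$: take $u_n=u+\psi(\cdot-ne_1)$ with a fixed $\psi\in X^-\setminus\{0\}$; then $Qu_n\equiv Qu$ and $Pu_n\rightharpoonup Pu$, so $u_n\to u$ in $\tau$ boundedly, yet $K(u_n)-K(u)$ keeps mass near $ne_1$ and does not tend to $0$ in norm. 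What is actually true, and all that your argument uses, is the weak sequential continuity of $\varphi'$ (cited in the paper from Willem, Theorem A.2): for a fixed vector $y_u$, $v\mapsto\langle\nabla\varphi(v),y_u\rangle$ is $\tau$-continuous on bounded sets. Your ``main obstacle'' paragraph, which rests this continuity on compactness of $K$, therefore misidentifies the mechanism.

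One further simplification: the finite-dimensional subspace $Y_u$ and the norm-approximation of $K(u)$ are unnecessary. Property (v) already follows from the $\tau$-local finiteness of the partition of unity because each local vector is a single fixed element of $X$, regardless of whether it lies in a designated finite-dimensional subspace. The paper simply takes $\omega(u)=2\nabla\varphi(u)/\|\nabla\varphi(u)\|^2$, normalized so that $\langle\nabla\varphi(u),\omega(u)\rangle=2$; after shrinking the $\tau$-neighborhood so that the pairing stays in $(1,3)$, this single vector gives all of (ii), (iii), (iv) without the two-parameter $(\gamma_u,\lambda_u)$ calibration you propose, and the bound $\|\omega(u)\|=2/\|\nabla\varphi(u)\|$ combined with Lemma \ref{5.lemma 2.7} delivers (iv) directly.
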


\begin{proof}
For any $u\in E_1$, let
$$\omega(u)=\frac{2\nabla\varphi(u)}{\|\nabla\varphi(u)\|^{2}},$$
then, there exists a $\tau$-neighborhood of $u$, $V_{u}\subset X$, satisfying
\begin{equation}\label{5.lessthan mu 8}
     \|v-u\|_{\tau}<\text{min}\{\frac{\mu}{8},\frac{1}{3}\},  \text{ for any }  v\in V_{u},
\end{equation}
  such that
$$ 1< \langle \nabla\varphi(v),  \omega(u) \rangle<3,  \text{ for any }   v\in V_{u}\cap \varphi^{\bar{\beta}}_{\beta-3}\backslash \{u\in X:\|Qu\| <  \delta+\frac{1}{2}\}.$$
Indeed, suppose that such $V_{u}$ does not exist, then there exists a sequence $\{v_n\}\subset\varphi _{\beta-3}:=\{u\in X: \varphi(u)\geq\beta-3\}$
with $v_n \overset{\tau}{\rightarrow} u $ and $\lim\limits_{n\rightarrow\infty}\langle \nabla\varphi(v_n),  \omega(u) \rangle \leq 1$ or $\geq 3$.
By Lemma \ref{5.upper bounded}, $v_n \overset{n}{\rightharpoonup} u$ weakly in $X$, which leads to a contradiction
since $\nabla\varphi$ is weakly continuous.
So,

$$\mathcal{N}_1=\{V_{u}:u\in E_1 \} \cup \{u\in X:\|Qu\| <  \delta+1\}$$
forms a $\tau$-open covering of $\varphi^{\bar{\beta}}_{\beta-3}$.

Since $\mathcal{N}_1$ is metric and paracompact, that is, there exists a locally finite $\tau$-open covering
$\mathcal{M}_1=\{M_{i}:i\in \Lambda \}$ of $\varphi^{\bar{\beta}}_{\beta-3}$ which is  finer than $\mathcal{N}_1$, where $\Lambda$ is an index set.
If $M_{i}\subset V_{u_{i}}$
for some $u_{i}\in E_1$, we take $\omega_{i}=\omega(u_{i})$ and if $M_{i}\subset \{u\in X:\|Qu\| <  \delta+1\}$,
we take $\omega_{i}=0$. Let $\{\lambda_{i}:i \in \Lambda\}$ be a $\tau$-Lipschitz continuous partition
of unity subordinated to $\mathcal{M}_1$, then we define the following vector field in $\mathcal{N}_1$

$$\xi_{1}(u)=\sum_{i \in \Lambda}\lambda_{i}(u)\omega_{i}, \quad u\in \mathcal{N}_1.$$

Since the $\tau$-open covering $\mathcal{M}_1$ of $\mathcal{N}_1$ is locally finite, each $u\in\mathcal{N}_1$ belongs to finite many sets $M_{i},i\in \Lambda$, that is, for any $u\in \mathcal{N}_1$ the sum $\xi_1(u)$ has only   finite terms. It follows that, for any $u\in \mathcal{N}_1$, there exist a $\tau$-open neighborhood $U_{u} \in \mathcal{M}_1$ of $u$ and  a constant $L_{u}>0$ such that  $\xi_1(U_{u})$ is contained in a finite-dimensional subspace of $X$ and

$$\|\xi_1(v)-\xi_1(w)\|\leq L_{u} \|v-w\|_{\tau},  \text{ for any } v,w \in U_{u},$$
where we used the fact that all norms of a finite-dimensional vector space are equivalent. This gives that $\xi_1(u)$ is locally Lipschitz continuous and $\tau$-locally Lipschitz $\tau$-continuous, and we also have

   $$1<\langle \nabla\varphi(u),\xi_1(u)\rangle  <3  \text{ for any }   u\in E_1.$$
Define
 $$ \tilde{\xi}_1(u)=\frac{\xi_1(u)-\xi_1(-u)}{2},$$
and let $\theta\in C^{\infty}(\mathbb{R})$ with $0\leq \theta \leq1$   be such that

 $$  \theta(t)=
 \left\{
   \begin{array}{ll}
     0,  \quad t\leq  \delta+\frac{1}{2} ,\\
     1,   \quad t\geq   \delta+\frac{2}{3},
   \end{array}
   \right.$$
where  $\delta$ is given in (\ref{5.3.1}).
Now, we define the vector field $\chi_1$ as follows

$$  \chi_1(u)=
 \left\{
   \begin{array}{ll}
     -\theta(\|Qu\| ) \tilde{\xi}_1(u),  \quad u\in \mathcal{N}_1,\\
     0,   \quad \|Qu\|  \leq\delta+\frac{1}{2},
   \end{array}
   \right.$$
then, \textbf{(i)}-\textbf{(iii)}, \textbf{(v)} follow  directly from the construction of $\chi_1(u)$.
By Lemma \ref{5.lemma 2.7},  there exists $\sigma>0$ such that

$$ \|\nabla\varphi(u)\|>\sigma, \text{ for any } u \in\varphi^{\bar{\beta}}\backslash \big( \bigcup\limits_{z \in [Q\mathcal{F},l_{\bar{\beta}}] }  X^{-} 
\oplus B_{ X^{+} }(z,\frac{\mu}{8}) \big),$$
 then, $\|\omega(u)\|< \frac{2}{\sigma}:=\sigma_{1}$, for any $u \in \varphi_{\beta-3}^{\bar{\beta}}\backslash \big( \bigcup\limits_{z \in [Q\mathcal{F},l_{\bar{\beta}}] }  X^{-} \oplus   B_{ X^{+}}(z,\frac{\mu}{8})  \big)$. This together with (\ref{5.lessthan mu 8}) gives \textbf{(iv)}.
\end{proof}

\begin{definition}\label{5.def of H A}
For each $A \in \tilde{\Sigma}$,   define $\mathcal{H}(A)$ to be  the class of maps $h: A\rightarrow X$ satisfying
\begin{itemize}
  \item $h$ is a homeomorphism of $A$ onto $h(A)$ in the original topology, i.e., the topology induced by $\|\cdot\|$, of $X$;
  \item $h$ is an odd admissible map which maps bounded set into bounded set;
  \item $\varphi(h(u)) \leq \varphi(u)$, for any $u \in A$.
\end{itemize}
\end{definition}

\begin{remark}\label{5.remark3.1}
 For any $A \in \tilde{\Sigma}$, $\mathcal{H}(A)$ is nonempty since it always contains the identity. Furthermore, $\mathcal{H}(A)$ is closed under composition, i.e., for any $A \in \tilde{\Sigma}$ and $h\in \mathcal{H}(A)$, $g\in \mathcal{H}\big(h(A)\big)$,  there holds $g\circ h \in \mathcal{H}(A)$, since $h(A)$ is also bounded and $\tau$-compact for any $h\in \mathcal{H}(A)$.
\end{remark}

Now, we give our first deformation lemma.
\begin{lemma}\label{5.first deformation lemma}{\bf (Deformation Lemma 1)}
Under conditions $(\textbf{\emph{V}})$, $(\textbf{\emph{f}}_\textbf{\emph{1}})$-$(\textbf{\emph{f}}_\textbf{\emph{5}})$, let
 $\mathcal{F}$ be a  finite set and let $M \in\tilde{\Sigma} $.  If  $\bar{\beta}:= \sup\limits_{u\in M} \varphi(u) > \zeta+1$, $\zeta$  is defined by  (\ref{5.3.2}),
then there exists  a  map $h \in \mathcal{H}(M)$ such that $h(M)\in\tilde{\Sigma}$  and
$h(M)\subset \varphi^{\bar{\beta}-1}$.

 \end{lemma}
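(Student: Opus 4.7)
The plan is to use the pseudogradient field $\chi_1$ from Lemma \ref{5.vectorfield1} to construct a Cauchy flow that pushes $M$ down by one energy level. Consider the initial value problem
\[
\frac{d\eta}{dt}(t,u)=\chi_1(\eta(t,u)),\qquad \eta(0,u)=u,
\]
for $u\in M$. Since $\chi_1$ is locally Lipschitz on $\varphi^{\bar\beta}_{\beta-3}$ by Lemma \ref{5.vectorfield1}(i), this admits a unique local solution. The bound $-3\le\langle\nabla\varphi(u),\chi_{1}(u)\rangle\le 0$ from Lemma \ref{5.vectorfield1}(ii) yields $\varphi(\eta(t,u))\in[\beta-3t,\bar\beta]$ along the trajectory, so for $t\in[0,1]$ we stay inside the domain of $\chi_1$ and the flow is well-defined on $[0,1]\times M$. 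I will set $h(u):=\eta(1,u)$.

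The energy drop is proved by a dichotomy. Fix $u\in M$. If $\varphi(\eta(t^{\ast},u))\le\bar\beta-1$ for some $t^{\ast}\in[0,1]$, then monotonicity along the flow (Lemma \ref{5.vectorfield1}(ii)) gives $\varphi(h(u))\le\bar\beta-1$. Otherwise $\varphi(\eta(t,u))>\bar\beta-1>\zeta$ for all $t\in[0,1]$; but then (\ref{5.3.2}) combined with $\bar\beta-1>\zeta$ forces $\|Q\eta(t,u)\|\ge\delta+1$ throughout, so $\eta(t,u)\in E$. Lemma \ref{5.vectorfield1}(iii) now gives $\langle\nabla\varphi(\eta),\chi_{1}(\eta)\rangle<-1$, and integrating over $[0,1]$ yields $\varphi(h(u))<\varphi(u)-1\le\bar\beta-1$. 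This shows $h(M)\subset\varphi^{\bar\beta-1}$ and $\varphi(h(u))\le\varphi(u)$, so the third bullet of Definition \ref{5.def of H A} holds.

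The remaining conditions from Definition \ref{5.def of H A} follow from the corresponding properties of $\chi_1$. Oddness of $\chi_1$ implies that $-\eta(t,-u)$ solves the same Cauchy problem, hence equals $\eta(t,u)$, giving oddness of $h$. The standard ODE inversion $h^{-1}=\eta(-1,\cdot)$ makes $h$ a homeomorphism onto $h(M)$ in the norm topology. Boundedness of $h(M)$ follows from boundedness of $M$ together with uniform boundedness of $\|\chi_1\|$ on the sandwich set $\varphi^{\bar\beta}_{\beta-3}\setminus\big(\bigcup_{z\in[Q\mathcal{F},l_{\bar\beta}]}X^{-}\oplus B_{X^+}(z,\mu/4)\big)$ (Lemma \ref{5.vectorfield1}(iv)); on the complementary piece the $X^+$-coordinates are trapped in a bounded neighborhood of $[Q\mathcal{F},l_{\bar\beta}]$, and the $X^-$-part stays bounded because the excursion time is finite.

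The main technical obstacle is verifying that $h$ is an admissible map and that $h(M)$ is $\tau$-compact. For admissibility, $\tau$-continuity of the flow is obtained by running the Picard iteration in the $\tau$-topology on bounded sets: Lemma \ref{5.vectorfield1}(i) provides the $\tau$-local Lipschitz $\tau$-continuity of $\chi_1$, and Lemma \ref{5.upper bounded} ensures that the relevant level sets are $\tau$-closed, so no escape occurs. The local finite-dimensional condition on $\{v-h(v)\}$ follows from Lemma \ref{5.vectorfield1}(v): if $U_u$ is a $\tau$-neighborhood of $u$ on which $\chi_{1}$ takes values in a finite-dimensional subspace $F_u$, then $\eta(t,v)-v=\int_0^t\chi_1(\eta(s,v))\,ds\in F_u$ for $v$ in a possibly smaller $\tau$-neighborhood on which the trajectories remain in $U_u$ for $s\in[0,1]$; shrinking $U_u$ along the compact trajectory of $u$ by a finite covering argument yields the required property for $v=h(v)$ as well. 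Finally, $h(M)\in\tilde\Sigma$: it is symmetric (from oddness), closed (as the image of the $\tau$-compact set $M$ under the $\tau$-continuous map $h$), and $\tau$-compact for the same reason. This completes the construction of $h\in\mathcal{H}(M)$ with the desired properties.
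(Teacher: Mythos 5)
There is a genuine gap in your argument for the well-definedness of the flow on $[0,1]\times M$. You assert that since $-3\le\langle\nabla\varphi,\chi_1\rangle\le 0$, the energy $\varphi(\eta(t,u))$ stays in $[\beta-3,\bar\beta]$, and conclude "the flow is well-defined on $[0,1]\times M$." But staying inside the level-set domain $\varphi^{\bar\beta}_{\beta-3}$ does not preclude finite-time blow-up in norm: this set is unbounded in $X$, and Lemma \ref{5.vectorfield1}\textbf{(iv)} only bounds $\|\chi_1\|$ \emph{outside} the cylinders $X^-\oplus B_{X^+}(z,\mu/4)$, $z\in[Q\mathcal F,l_{\bar\beta}]$; inside them $\|\chi_1\|$ can be arbitrarily large. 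Thus a trajectory could leave every bounded set before $t=1$, and local Lipschitz continuity alone gives only $T_{\max}>0$. The paper handles this by showing that if $T_{\max}\le 1$ then $\|\chi_1(\eta(t_m,u))\|\to\infty$ along some $t_m\nearrow T_{\max}$, and then rules out the two possible configurations: either (a) the corresponding approximate-critical points $v_m$ settle into a single cylinder, which forces $\|v_m-\mathcal K\|_\tau\to 0$ and contradicts $\theta(\eta(t_m,u))\neq 0$; or (b) the trajectory enters infinitely many distinct cylinders, which contradicts the speed bound $\|\chi_1\|\le\sigma_1$ between cylinders since $|t_1-t_2|\to 0$ but the $X^+$-distance traversed is $\ge\mu/2$. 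This dichotomy, which is the core technical content of the paper's proof, is entirely absent from your write-up.

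Your boundedness argument for $h(M)$ suffers from the same omission. You claim "the $X^-$-part stays bounded because the excursion time is finite," but the cylinders extend unboundedly in all $X^-$-directions, and there is no bound on $\|\chi_1\|$ inside them, so finite time does not control the $X^-$-displacement. The paper instead argues that once $\eta(1,M)$ is known to be $\tau$-compact, $Q(\eta(1,M))$ is compact hence bounded, and then uses the energy lower bound $\varphi(\eta(1,u))\ge\beta-3$ together with inequality (\ref{xiajie to youjie}), $\varphi(u)\le c_1\|u^+\|^2-c_2\|u^-\|^2$, to bound $\|P\eta(1,u)\|$. Your remaining points (the dichotomy yielding $h(M)\subset\varphi^{\bar\beta-1}$, oddness, admissibility via finite-dimensional localization of $v-h(v)$) match the paper's approach, but without the $T_{\max}>1$ argument the proof is incomplete.
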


\begin{proof}
Let $\chi_1(u):\varphi^{\bar{\beta}}_{\beta-3}\rightarrow X$ with $\beta:= \inf\limits_{u\in M} \varphi(u)$ be the vector field given in Lemma \ref{5.vectorfield1},
we study the following Cauchy problem
\begin{equation}\label{donglixitong1}
 \left\{
   \begin{array}{ll}
    \frac{d\eta_1}{dt}=\chi_1(\eta_1)\\
     \eta_1(0,u)=u \in M.
   \end{array}
   \right.
\end{equation}
By the standard theory of ordinary differential equation in Banach space, we know that the initial problem (\ref{donglixitong1}) has a
unique solution on $[0,T_{max})$. Now,
we claim that $T_{max}>1$ for any $u\in M$.
In fact, by Lemma \ref{5.vectorfield1} \textbf{(ii)}, if $T_{max}\leq1$,  for any $t\in [0,T_{max})$ and $u \in M$ we know that
\begin{eqnarray}
  \nonumber\varphi(\eta_1(t,u)) &=& \varphi(u)+\int_{0}^{t}  \frac{d}{ds} \varphi(\eta_1(s,u)) ds\\
     \nonumber&=&   \varphi(u)+\int_{0}^{t}    \langle \nabla\varphi(\eta_1(s,u)), \chi_1(\eta_1(s,u))  \rangle ds \\
     &\geq &   \varphi(u)+\int_{0}^{t} -3  ds  \geq \beta-3T_{max}   \geq\beta-3,\label{varphiMyouxiajie}
\end{eqnarray}
so, $\eta_1(t,u)\subset\varphi^{\bar{\beta}}_{\beta-3}$ for any $(t,u)\in [0,T_{max})\times M$ where $\chi_1(\eta_1(t,u))$ is well defined.
  If $T_{max}\leq 1$, then there exist a sequence $t_m\nearrow T_{max}$ and
$u \in M$ such that  $\|\chi_1(\eta_1(t_{m},u))\|\rightarrow\infty$ as $m\rightarrow\infty$.
By (\ref{5.lessthan mu 8}),  there exists a sequence $\{v_m\}\subset X$ with $\|v_m-\eta_1(t_{m},u)\|_{\tau}\leq \text{min}\{\frac{\mu}{8},\frac{1}{3}\}$
 for all $m\geq 1$ satisfying
 $\nabla\varphi(v_m)\rightarrow 0$ in $X$ and $\theta(\|Q\eta_1(t_{m},u)\|)\neq 0$. Then by Lemma \ref{5.lemma2.6}, passing to a subsequence if necessary, we know that
 either
\\$ \mathbf{(a)} $  there is $z \in [Q \mathcal{F},l_{\bar{\beta}}]$ such that $v_m \in X^{-} \oplus \Big( B_{X^{+}}(z,\frac{\mu}{8})   \Big)$,
\\or,
\\$\mathbf{(b) }$  the sequence $\{v_m\}$ enters infinitely many sets of the form $X^{-} \oplus \Big(   B_{ X^{+} }(z,\frac{\mu}{4}) \Big)$
 where $z \in [Q \mathcal{F},l_{\bar{\beta}}]$.

However, both the cases are impossible. If  $ \textbf{(a)} $  holds, then $Qv_{m}\rightarrow z $ strongly in $X$.
Since $\|v_m\|$ is bounded by Lemma \ref{5.PS bounded},  we also have $Pv_m\rightharpoonup y$ weakly in $X$.
 By the weak continuity of $\varphi'$ we
know  $(y+z) \in \mathcal{K}$ hence $\|v_{m}-\mathcal{K}\|_{\tau}\rightarrow 0$ and this contradicts with
$\theta(\|Q\eta(t_{m},u)\|)\neq 0$.
If  $ \textbf{(b)} $  holds, we may assume that $\eta(t,u)$ leaves $X^{-} \oplus \Big(   B_{X^{+}}(z_1,\frac{\mu}{4})   \Big)$
as $t=t_1$  and  enters $X^{-} \oplus \Big(   B_{ X^{+}}(z_2,\frac{\mu}{4}) \Big)$ as $t=t_2$. Then
$\|\eta_1(t_1,u)-\eta_1(t_2,u)\|\geq \frac{1}{2}\mu$.
by Lemma \ref{5.vectorfield1} \textbf{(ii)},  there exists $\sigma_1>0$ such that

$$\|\chi_1(u)\|\leq \sigma_{1} \text{ for any } u \in \varphi^{\bar{\beta}}\backslash \big( \bigcup\limits_{z \in [Q\mathcal{F},l_{\bar{\beta}}] }  X^{-} 
\oplus   B_{ X^{+}}(z,\frac{\mu}{4})  \big).$$
 So,
\begin{equation*}
    \frac{1}{2}\mu \leq \|\eta_1(t_1,u)-\eta_1(t_2,u)\| \leq\int^{t_2}_{t_1} \|\chi_1(\eta_1(s,u))\| ds\leq  \sigma_{1}(t_2-t_1),
\end{equation*}
which is a contradiction since  $|t_1-t_2|$ can be arbitrarily small as  $t_1$ and $t_2$  arbitrarily close to $T_{max}$.

Next, we claim that    $h(\cdot)= \eta_1(1,\cdot)$.
It is clear that for any $t\in [0,1]$, $\eta_1(t,\cdot)$ is an odd homeomorphism from $M$ to $\eta_1(t,M)$.
 Furthermore, by almost the same way as the proof of \cite[Lemma 6.8]{MW} we can get that $\eta_1$ is $\tau$-continuous and
for any $(t,u)\in [0,1] \times M$ there exist a neighborhood $W_{(t,u)}$ of $(t,u)$ in
          the $|\cdot|\times\tau$-topology such that
          $$\{  v-\eta_1(s,v)| (s,v)\in  W_{(t,u)}\cap  ([0,1] \times M)   \}$$
          is contained in a finite-dimensional subspace of $X$.

Then, we claim that  $\eta_1(1, M) \subset \varphi^{  \bar{\beta} -1} $,  $\bar{\beta}$ is given by Lemma \ref{5.first deformation lemma}.
 Suppose there exists $u\in M$
such that $\varphi(\eta_1(1, u)) > \bar{\beta}-1$, then $\varphi(\eta_1(t, u))>  \bar{\beta}-1$ for any $t\in [0,1]$,
since (by Lemma \ref{5.vectorfield1} \textbf{(ii)})
$\varphi(\eta_1(t, u))$ is nonincreasing  in $t$. So, $\eta_1(t, u) \in E_1$ with $E_1$ given in Lemma \ref{5.vectorfield1} \textbf{(iii)}, for any  $(t, u) \in [0,1]\times M$, then
\begin{eqnarray*}
  \varphi(\eta_1(1,u) &=& \varphi(\eta_1(0,u)+ \int_{0}^{1}  \langle \nabla\varphi(\eta_1(s,u)), \chi_1(\eta_1(s,u)) \rangle ds\\
         &<& \varphi(\eta_1(0,u)+ \int_{0}^{1}  -1 ds \leq \bar{\beta}-1,
\end{eqnarray*}
which lead to contradiction.

Finally, in order to prove $\eta_1(1,\cdot) \in \mathcal{H}(M)$,  we only need to show that $\eta_1(1,M)$  is  $\tau$-compact and bounded.
Indeed, the $\tau$-compactness of  $\eta_1(1,M)$  follows directly since $\eta_1(1,\cdot)$ is $\tau$-continuous.
Thus, $Q(\eta_1(1,M))$ is bounded in $X^+$.
On the other hand, by (\ref{varphiMyouxiajie}) we know   $\varphi(\eta_1(1,u))\geq\beta-3$ for any $u \in M$.
Together with (\ref{xiajie to youjie}) we have $\eta_1(1,M)$  is   bounded.
So, let $h(\cdot)=  \eta_1(1,\cdot) $,   the proof is complete.
\end{proof}

Next we define a new class of maps related to $\mathcal{H}(A)$ for convenience.
\begin{definition}\label{5.def of H tild A}
For $A \in \tilde{\Sigma}$ ($\tilde{\Sigma}$ is defined by (\ref{defofsigematil})),  define   $\tilde{\mathcal{H}}(A)$
 to be the set of all maps $h: A\rightarrow X$ satisfying:
\begin{itemize}
  \item $h$ is a homeomorphism of $A$ onto $h(A)$ in the original topology, i.e., the topology induced by $\|\cdot\|$, of $X$;
  \item $h$ is an odd admissible map which maps bounded set into bounded set;
  \item  $\varphi(h(u)) \leq \varphi(u)$ for any $u \in A\cap \varphi_{ a -1}$ and $\varphi(h(u)) \leq  a -1$
for any $u \in A\cap \varphi^{ a -1}$, where $ a $ is given by Lemma \rm{\ref{5.lemma2.1}\textbf{(ii)}}.
\end{itemize}
\end{definition}
 Clearly  $ \mathcal{H}(A) \subset  \tilde{\mathcal{H}}(A)$ for any $A \in \tilde{\Sigma}$, so, for $h$
 given in Lemma \ref{5.first deformation lemma} we have $h\in\tilde{\mathcal{H}}(M)$.

\begin{lemma}\label{5.vectorfield2}
 Under conditions $(\textbf{\emph{V}})$, $(\textbf{\emph{f}}_\textbf{\emph{1}})$-$(\textbf{\emph{f}}_\textbf{\emph{5}})$, let
 $\mathcal{F}$ be a  finite set and let $M \in \Sigma  $ be bounded in $X$,
then for any  $R> \rho:= \sup\{\|u\|: u \in M \}$,
there exists  a vector field $\chi_2(u):\varphi^{\zeta+2}\rightarrow X$ ($\zeta$  is defined by  (\ref{5.3.2})) with the following properties:
\begin{description}
  \item[(i)] $\chi_2(u)$ is locally Lipschitz continuous   and $\tau$-locally Lipschitz $\tau$-continuous on $\varphi^{\zeta+2}$.
  \item[(ii)] $\chi_2(u)$ is odd and   $ \langle\nabla\varphi(u),\chi_2(u)\rangle  \leq 0$,  for any $u \in \varphi^{\zeta+2}_{ a -1}$,
                 where $ a $ is given by Lemma \rm{\ref{5.lemma2.1}\textbf{(ii)}}.
  \item[(iii)] $ \langle\nabla\varphi(u),\chi_2(u)\rangle  <-1$,  for any
               $$u\in \varphi_{a-1}^{\zeta+2} \cap\{u\in X:\|u\|_{\tau}\geq\delta_{0},\|Qu-[Q\mathcal{F},l_{\zeta+2}]\backslash \{0\}\|\geq
                 \frac{\mu}{8}\}  \cap \bar{B}_{R},$$
                where $\bar{B}_{R}=\{u\in X:\|u\|\leq R\}$, $\delta_{0}$ is given by (\ref{5.r}), $l_{\zeta+2}$ and $\mu$ are given in
                 Lemma \ref{5.lemma 2.7}.
\item[(iv)] There exists $\sigma_2>0$ such that $\|\chi_2(u)\|<\sigma_2$  for  any    $ u\in  \varphi^{\zeta+2}$.
\item[(v)] Each $u\in \varphi^{\zeta+2}$ has  a $\tau$-open neighborhood of $u$, $U_{u}\subset X$,
            such that   $\chi_2(U_{u}\cap\varphi^{\zeta+2})$ is contained in a finite-dimensional subspace of $X$.
\end{description}
\end{lemma}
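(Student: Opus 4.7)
The proof will follow the blueprint of Lemma~\ref{5.vectorfield1}, adapting the pseudogradient construction to the elaborate ``difficult set''
$$E_2:=\varphi^{\zeta+2}_{a-1}\cap\bigl\{u:\|u\|_\tau\geq\delta_0,\ \|Qu-([Q\mathcal{F},l_{\zeta+2}]\setminus\{0\})\|\geq\mu/8\bigr\}\cap\bar B_R$$
appearing in (iii). The plan is to apply Lemma~\ref{5.lemma 2.7} with $c=\zeta+2$ and radius $s\in(0,\mu/8)$ to obtain $\sigma>0$ such that $\|\nabla\varphi(u)\|>\sigma$ on $E_2$, so that the normalized direction $\omega(u):=2\nabla\varphi(u)/\|\nabla\varphi(u)\|^2$ is uniformly bounded by $2/\sigma$; this directly yields the global constant $\sigma_2=2/\sigma$ in (iv), a key difference from Lemma~\ref{5.vectorfield1} where boundedness was only required away from the critical manifolds.

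For each $u_0\in E_2$, the weak continuity of $\nabla\varphi$ combined with Lemma~\ref{5.upper bounded} produces a bounded $\tau$-open neighborhood $V_{u_0}$ of $\tau$-radius $<\min\{\mu/8,1/3\}$ on which $1<\langle\nabla\varphi(v),\omega(u_0)\rangle<3$. I will cover $\varphi^{\zeta+2}$ by $\{V_{u_0}:u_0\in E_2\}$ together with a collection of $\tau$-open ``safe'' sets: the $\tau$-open balls $\{u:\|Qu-z\|<\mu/8\}$ for each $z\in[Q\mathcal{F},l_{\zeta+2}]\setminus\{0\}$, the $\tau$-open neighborhood $\{u:\|u\|_\tau<\delta_0\}$ of the origin, and a $\tau$-open cover of $\{u:\|u\|>R\}$ built from the $\tau$-continuous functional $\|Qu\|$ and finitely many linear functionals of $Pu$. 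By paracompactness of the metrizable $\tau$-topology I then extract a locally finite refinement $\{M_i\}_{i\in\Lambda}$ with a $\tau$-Lipschitz partition of unity $\{\lambda_i\}$, and set $\omega_i:=\omega(u_i)$ whenever $M_i$ sits inside some $V_{u_i}$ with $u_i\in E_2$, and $\omega_i:=0$ otherwise. Finally I put
$$\xi(u):=\sum_{i\in\Lambda}\lambda_i(u)\omega_i,\quad \tilde\xi(u):=\tfrac{1}{2}\bigl(\xi(u)-\xi(-u)\bigr),\quad \chi_2(u):=-\tilde\xi(u).$$

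Properties (i), (ii), (v) then follow verbatim from the finite-support and partition-of-unity argument used in Lemma~\ref{5.vectorfield1}. Property (iv) is immediate since $\|\omega_i\|<2/\sigma$ for every non-zero $\omega_i$. For (iii), any $u\in E_2$ lies outside every safe set, so each $M_i$ containing $u$ must sit inside some $V_{u_j}$ with $u_j\in E_2$, whence $\langle\nabla\varphi(u),\omega_i\rangle>1$; the symmetrization preserves this because $E_2$ is symmetric (using $\varphi(-u)=\varphi(u)$ from $(\mathbf{f_4})$) and $\nabla\varphi$ is odd, so $\langle\nabla\varphi(u),\xi(u)\rangle>1$ together with $\langle\nabla\varphi(u),\xi(-u)\rangle<-1$ gives $\langle\nabla\varphi(u),\tilde\xi(u)\rangle>1$, i.e., $\langle\nabla\varphi(u),\chi_2(u)\rangle<-1$.

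The main obstacle will be ensuring that Lemma~\ref{5.lemma 2.7} actually delivers the uniform bound $\|\nabla\varphi(u)\|>\sigma$ over the whole of $E_2$: the condition $\|u\|_\tau\geq\delta_0$ only guarantees $u\notin X^-\oplus B_{X^+}(0,\delta_0)$ when $\|Qu\|\geq\delta_0$, whereas the case of small $\|Qu\|$ with large $P$-sum is not controlled by the $\tau$-norm condition alone. To resolve this I will either arrange the choice of $\delta_0$ and $\mu$ so that $\delta_0\geq\mu/8$ (ensuring $\|Qu\|<\mu/8$ forces $u\notin E_2$), or use Lemma~\ref{5.lemma2.1}(iii) to rule out such $u$ on the level set $\{\varphi\geq a-1\}$ by a suitable shrinking of the parameters. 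The other delicate technicality is that the safe cover must be built entirely from $\tau$-continuous quantities, since $\varphi$ itself is not $\tau$-continuous---this is precisely the difficulty that prevents a direct application of the Kryszewski--Szulkin framework.
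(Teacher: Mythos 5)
Your construction reproduces the paper's proof essentially line by line: the same normalization $\omega(u)=2\nabla\varphi(u)/\|\nabla\varphi(u)\|^{2}$, the same $\tau$-open cover by neighborhoods $V_{u}$ together with safe sets, the same locally finite refinement and $\tau$-Lipschitz partition of unity, and the same odd symmetrization $\tilde{\xi}(u)=\tfrac{1}{2}(\xi(u)-\xi(-u))$. The only structural difference is cosmetic: the paper multiplies $\tilde{\xi}$ by explicit $\tau$-Lipschitz cutoffs $\theta$ and $\psi$ to kill $\chi_{2}$ near the origin and near $[Q\mathcal{F},l]\setminus\{0\}$, while you fold those regions directly into the cover; the resulting fields agree up to the partition-of-unity weights.

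The concern you raise at the end is the decisive one, and it is a genuine gap in the paper's own argument, not an artifact of your reconstruction. Lemma \ref{5.lemma 2.7} gives a uniform bound $\|\nabla\varphi\|>\sigma$ only on $\varphi^{c}\setminus\bigcup_{z}\bigl(X^{-}\oplus B_{X^{+}}(z,s)\bigr)$ where the union runs over \emph{all} $z\in[Q\mathcal{F},l_{c}]$, including $z=0$. The paper's set $E$ excludes the tubes around nonzero $z$ via $N_{0}$ but keeps points near the tube at $z=0$, because $\|u\|_{\tau}\geq\delta_{0}/2$ does \emph{not} force $\|Qu\|\geq\delta_{0}/2$: the $\tau$-norm is a maximum whose $P$-term can carry the bound alone. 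So Lemma \ref{5.lemma 2.7} does not justify the uniform $\sigma$ on $E\cap\bar{B}_{R}$, and without it both (iii) and (iv) are at risk, since if $\|\nabla\varphi(u)\|\to0$ along a sequence in $E$ then $|\langle\nabla\varphi,\chi_{2}\rangle|\leq\sigma_{2}\|\nabla\varphi\|\to0$, which is incompatible with the lower bound $1$.

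Neither of your two suggested repairs closes this. Choosing $\delta_{0}\geq\mu/8$ changes nothing, because $\|u\|_{\tau}\geq\delta_{0}$ together with $\|Qu\|$ arbitrarily small is still permitted by a large $P$-component. And Lemma \ref{5.lemma2.1}\textbf{(iii)} only yields $\limsup_{\|Qu\|\to0}\varphi(u)\leq0$, which is entirely compatible with $\varphi(u)\geq a-1<0$, so it cannot expel such $u$ from $\varphi_{a-1}$. The correct fix, and the one the paper's own notation points to (the cutoff $\theta$ is introduced with $\|u\|_{\tau}$ but then applied with $\|Qu\|$ in the formula defining $\chi_{2}$), is to replace $\|u\|_{\tau}$ by $\|Qu\|$ throughout: define $E$ via $\|Qu\|\geq\delta_{0}/2$, make $\theta$ a cutoff in $\|Qu\|$, and state (iii) with $\|Qu\|\geq\delta_{0}$. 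Then choosing $s\leq\min\{\delta_{0}/2,\mu/16\}$ in Lemma \ref{5.lemma 2.7} places $E\cap\bar{B}_{R}$ outside \emph{every} tube $X^{-}\oplus B_{X^{+}}(z,s)$, $z\in[Q\mathcal{F},l]$, including $z=0$, giving the uniform $\sigma$. The downstream use in the second deformation lemma is unaffected, because the choice of $\epsilon$ there only needs $\sup_{\|Qu\|\leq\delta_{0}}\varphi(u)<b$, which is exactly (\ref{5.r}).
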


\begin{proof}
Let
\begin{equation*}
     N_{0}=X^- \oplus \bigcup\limits_{z \in[Q\mathcal{F},l_{\zeta+2}]\backslash \{0\}} \big(  B_{X^+} (z,\frac{\mu}{16})  \big),
\end{equation*}
and
\begin{equation*}
    E_2=\varphi^{\zeta+2} \cap\{u\in X:\|u\|_{\tau}\geq\frac{\delta_{0}}{2}\}  \cap \{X\backslash  N_{0}\},
\end{equation*}
where $l_{\zeta+2}$ and $\mu$ are given by Lemma \ref{5.lemma 2.7}, clearly,
 there exists   $\sigma>0$    such that
\begin{equation}\label{5.lemma3.3 1}
   \| \nabla\varphi(u)\| >\sigma   \text{ for any }   u \in E_2.
\end{equation}
Define
 $$\omega(u)=\frac{2\nabla\varphi(u)}{\|\nabla\varphi(u)\|^{2}}, \text { for } u\in E_2\cap \bar{B}_{R}.$$
Similar to Lemma \ref{5.vectorfield1}, by the weak continuity of $\nabla\varphi$  and (\ref{xiajie to youjie}), for any $u\in E_2\cap \bar{B}_{R}$,
 there exists a $\tau$-neighborhood of $u$, $V_{u}\subset X$, such that
\begin{equation}\label{5.lemma3.3 2}
     \langle \nabla\varphi(v),  \omega(u) \rangle >1 , \text{ for any } v\in V_{u}\cap \varphi_{a-1} \cap E_2\cap \bar{B}_{R}.
\end{equation}
Clearly, (by the convexity of $\bar{B}_{R}$)  $\bar{B}_{R}$ is $\tau$-closed, so, $X \backslash \bar{B}_{R}$ is $\tau$-open, then

$$\mathcal{N}_2=\{V_{u}:u\in E_2\cap \bar{B}_{R}\} \cup \Big(X\backslash \bar{B}_{R} \Big)$$
forms a $\tau$-open covering of $E_2$.
Since $\mathcal{N}_2$ is metric and paracompact, that is, there exists a locally finite $\tau$-open covering $\mathcal{M}_2=\{M_{i}:i\in \Lambda \}$ of $E_2$ finer than $\mathcal{N}_2$.
Using a similar partition of unity argument as Lemma 3.1, we can construct a pseudogradient vector field on $\varphi^{\zeta+2}$.
To be specific,
 take $\omega_{i}=\omega(u_{i})$ if $M_{i}\subset V_{u_{i}}$ for some $u_{i}\in E$
and take $\omega_{i}=0$ if $M_{i}\subset X\backslash B_{R}$.
 Let

$$\xi_2(u)=\sum_{i \in \Lambda}\lambda_{i}(u)\omega_{i}, ~u\in \mathcal{N}_2,$$
where $\{\lambda_{i}:i \in \Lambda\}$ be a $\tau$-Lipschitz continuous partition
of unity subordinated to $\{M_i\}$.
Since the $\tau$-open covering $\{M_i\}$ is locally finite, each $u\in\mathcal{N}_2$ belongs
to finite many sets $M_{i}$. Therefore, for every $u\in \mathcal{N}_2$, the sum $\xi_2(u)$ is only a finite sum.
It follows that, for any $u\in \mathcal{N}_2$, there exists a $\tau$-open neighborhood of $u$, $U_{u} \subset \mathcal{N}_2$,
such that  $\xi_2(U_{u})$ is contained in a finite-dimensional subspace of $X$.
By the fact that all norms for a finite-dimensional vector space are equivalent, we know that there exists $L_{u}>0$ such that

$$\|\xi_2(v)-\xi_2(w)\|\leq L_{u} \|v-w\|_{\tau},   \text{ for any }   v,w \in U_{u}.$$
Then it is easy to see that $\xi_2(u)$ is locally
Lipschitz continuous and $\tau$-locally Lipschitz $\tau$-continuous.
By (\ref{5.lemma3.3 1}) and (\ref{5.lemma3.3 2}), we also have

$$\langle \nabla\varphi(u),\xi_2(u)\rangle >1 \text{ and }  \|\xi_2(u)\|<\frac{2}{ \sigma}:=\sigma_2,
\text{ for any } u\in E_2\cap \bar{B}_{R}\cap \varphi_{a-1},$$
and

$$\langle \nabla\varphi(u),\xi_2(u)\rangle\geq 0 ,
\text{ for any  } u\in E_2\cap \varphi_{a-1}.$$
Define

$$ \tilde{\xi}_2(u)=\frac{\xi_2(u)-\xi_2(-u)}{2},$$
and take the following  two Lipschitz continuous and $\tau$-Lipschitz $\tau$-continuous cut-off   functions:

 $$  \vartheta(u)=
 \left\{
   \begin{array}{ll}
      1,   \text{ if }  \|u\|_{\tau}\geq \delta_0,\\
      0,   \text{ if }  \|u\|_{\tau}\leq \frac{2\delta_0}{3},
   \end{array}
   \right.$$
and

 $$  \psi(u)=
 \left\{
   \begin{array}{ll}
     1,  \text{ if }  \|Qu-z\|\geq \frac{\mu}{8}, \text{ for any } z \in[Q\mathcal{F},l_{\zeta+2}]\backslash \{0\}, \\
     0,   \text{ if }  \|Qu-z\|\leq \frac{\mu}{10}, \text{ for any } z \in[Q\mathcal{F},l_{\zeta+2}]\backslash \{0\}.
   \end{array}
   \right.$$
Define the vector field $\chi_2: \varphi^{\zeta+2}\rightarrow X$ by

$$  \chi_2(u)=
 \left\{
   \begin{array}{ll}
     -\vartheta( u )\psi(u)\tilde{\xi}_2(u), \text{ for }  u\in \mathcal{N}_2,\\
     0,   \text{ for }  \|Qu\| \leq\frac{2}{3}\delta_0  \text{ or } \|Qu-[Q\mathcal{F},l_{\zeta+2}]\backslash \{0\}\|\leq \frac{\mu}{10}.
   \end{array}
   \right.$$
Then, it is easy to see that $\chi_2$ is well defined on  $\varphi^{\zeta+2} $
and satisfies the   properties \textbf{(i)}-\textbf{(v)}.
\end{proof}

Before giving our second deformation lemma, we introduce a new class of admissible   maps related to $\tilde{\mathcal{H}}(A)$.

\begin{definition}\label{5.def of H tild H}
Let  $H\subset X$ be a fixed set,    define
  $\tilde{\mathcal{H}}_H$ is the set   of all maps $h: H \rightarrow X$ satisfying
\begin{itemize}
  \item $h$ is an admissible odd map and  $h$ is  homeomorphism in the original topology of $X$;
  \item  if $A\subset H$  is  bounded, then $h(A)$ is also  bounded;
  \item $\varphi(h(u)) \leq \varphi(u)$ for any $u \in H\cap \varphi_{a-1}$ and $\varphi(h(u)) \leq a-1$
for any $u \in H\cap \varphi^{a-1}$, where  $a$   is given in Lemma \rm{\ref{5.lemma2.1}\textbf{(ii)}}.
\end{itemize}
\end{definition}
Clearly, for any $h \in \tilde{\mathcal{H}}_H$ and $A \in \tilde{\Sigma}$ with  $A \subset H$, we have $h\mid_{A}\in \tilde{\mathcal{H}}(A)$.

For $A \in  \Sigma$  we
recall the Krasnoselskii genus $\gamma(A)$ of $A$ (see section 7 of \cite{Minimax-methods}):

\begin{equation}\label{5.def of genus}
    \gamma(A):= \min \{ k \in \mathbf{N}:  \exists \text{ odd continuous map } \phi: A\rightarrow \mathbb{R}^{k}\backslash \{0\} \};
~ \gamma(\emptyset) :=0.
\end{equation}
Then we have the  following second deformation lemma.

\begin{lemma}\label{5.second deformation lemma}{\bf(Deformation Lemma 2)}
Under conditions $(\textbf{\emph{V}})$, $(\textbf{\emph{f}}_\textbf{\emph{1}})$-$(\textbf{\emph{f}}_\textbf{\emph{5}})$, let
 $\mathcal{F}$ be a  finite set and let $M \in\tilde{\Sigma} $.  If
\begin{equation*}
 b\leq \bar{\beta} := \sup\limits_{u\in M} \varphi(u) < \zeta+2, \text{ with } b \text{   given in  } (\ref{5.r}),
\end{equation*}
and
\begin{equation}\label{deformationlemma2epsilon}
   0<\epsilon< \min \{ b-\sup\limits_{\|u\|_{\tau}\leq \delta_0} \varphi(u), \frac{\mu}{8\sigma_2}, 1 \} \text{ with } \sigma_2 \text{ given in Lemma \rm{\ref{5.vectorfield2} \textbf{(iv)}}},
\end{equation}
then there exist 
a symmetric $\tau$-open set $\mathcal{N}$ with $ \bar{\mathcal{N}} $   $\tau$-closed and $\gamma(\bar{\mathcal{N}})=1$
and  $h\in \tilde{\mathcal{H}}_{\varphi^{\zeta+2}}$, such that
$h(M\backslash\mathcal{N} )\in\tilde{\Sigma}$   and

\begin{equation*}
    h(M\backslash\mathcal{N})\subset \varphi^{\bar{\beta}-\epsilon}.
\end{equation*}

\end{lemma}

\begin{proof}
Define

\begin{equation*}
    \mathcal{N}:=\bigcup\limits_{z \in[Q\mathcal{F},l_{\zeta+2}]\backslash \{0\}} \big( X^- \oplus  B_{X^+} (z,\frac{\mu}{4})  \big)
     =   X^- \oplus \bigcup\limits_{z \in[Q\mathcal{F},l_{\zeta+2}]\backslash \{0\}} \big(  B_{X^+} (z,\frac{\mu}{4})  \big),
\end{equation*}
where $l_{\zeta+2}$ and $\mu$ are given in  Lemma \ref{5.lemma 2.7}.
Note $ [Q\mathcal{F},l_{\zeta+2}] $ is countable and for any $z \in[Q\mathcal{F},l_{\zeta+2}]$
 $\big( X^- \oplus  B_{X^+} (z,\frac{\mu}{4})  \big)$ is contractible, then

\begin{equation*}
    \gamma(\bar{\mathcal{N}})=1.
\end{equation*}

Let $\chi_2(u):\varphi^{\zeta+2}\rightarrow X$ be the  vector field given in Lemma \ref{5.vectorfield2},
  we  consider the following Cauchy problem

$$
 \left\{
   \begin{array}{ll}
    \frac{d\eta_2}{dt}=\chi_2(\eta_2)\\
     \eta_2(0,u)=u \in \varphi^{\zeta+2} .
   \end{array}
   \right.$$
By the standard theory of ordinary differential equation in Banach space, we know that the initial problem has a
unique solution on $[0,\infty)$. Clearly, $\eta_2(t,u)$ is odd in $u$, furthermore, the similar argument as the proof of Lemma 6.8 of \cite{MW} yields
that $\eta_2$ is an admissible homotopy.   By Lemma \ref{5.vectorfield2} \textbf{(ii)},  we   have

$$\frac{d}{dt}\varphi(\eta_2(t,u))=\langle \nabla\varphi(\eta_2(t,u)) , \chi_{2}(\eta_2(t,u)) \rangle\leq 0, \text{ for }  \eta_2(t,u) \in \varphi^{\zeta+2}_{a-1},$$
where  $a$   is given in Lemma \rm{\ref{5.lemma2.1}\textbf{(ii)}}.

For any  $\epsilon$ given by (\ref{deformationlemma2epsilon}) and 
let

\begin{equation*}\label{deformationlemma2R}
         R=  \sigma_2 \epsilon+\rho+1, ~\rho:= \sup\{\|u\|: u \in M \},
\end{equation*}
 we claim that $\{\eta_2(t,u): 0\leq t \leq \epsilon , u\in M\}\subset B_{R}$.
Indeed, since

$$\eta_2(t,u)=u+\int_{0}^{t} \chi_{2}(\eta_2(s,u))  ds,$$
by Lemma \ref{5.vectorfield2} \textbf{(iv)} we know that, for any $t\in[0,\epsilon]$ and $u \in M$,

\begin{eqnarray}
  \nonumber\|\eta_2(t,u) \|&\leq& \|u\|+ \int_{0}^{t} \|\chi_2(\eta_2(s,u)) \| ds \\
     &\leq& \|u\|+ \int_{0}^{t} \sigma_2 ds  \leq  \rho+   \sigma_2 \epsilon  <R. \label{deformationlemma2bound}
\end{eqnarray}
Then by (\ref{deformationlemma2epsilon}), we have

$$\sup\limits_{\|u\|_{\tau}\leq \delta_0}\varphi(u)<b-\epsilon,$$
that is,

$$\{u\in X: \|u\|_{\tau}\leq \delta_0 \}\subset \varphi^{b-\epsilon}.$$
 Moreover, by Lemma \ref{5.vectorfield2} \textbf{(iv)}, for $t\in[0,\infty)$,

\begin{equation*}
  \|\eta_2(t,u)-u \| \leq   \int_{0}^{t} \|\chi_2(\eta_2(s,u)) \| ds
      \leq  \int_{0}^{t} \sigma_2 ds
       \leq    \sigma_2 t.
\end{equation*}

Let $t=\epsilon$, this gives

\begin{equation*}
    \|\eta_2(\epsilon,u)-u \|\leq  \sigma_2 \epsilon<\frac{\mu}{8},
\end{equation*}
here    $\epsilon< \frac{\mu}{8\sigma_2}$ is required. Then, for any $u\in M\backslash\mathcal{N}$, we have

\begin{equation*}
   \eta_2(\epsilon,u) \in  \varphi^{\zeta+2} \cap  \Big( X^- \oplus \bigcup\limits_{z \in[Q\mathcal{F},l_{\zeta+2}]\backslash \{0\}} \big(  B_{X^+} (z,\frac{\mu}{8})  \big) \Big).
\end{equation*}

Next, we claim that $\eta_2(\epsilon, M\backslash \mathcal{N})\subset \varphi^{\bar{\beta}-\epsilon}$.
If  there exists $u \in M\backslash \mathcal{N}$ such that $\eta_2(\epsilon, u)\geq\bar{\beta}-\epsilon$,
then, by Lemma \ref{5.vectorfield2} \textbf{(iii)} we have

\begin{eqnarray*}
  \varphi(\eta_2(\epsilon,u) &=& \varphi(\eta_2(0,u)+ \int_{0}^{\epsilon}  \langle \nabla\varphi(\eta_2(s,u)), \chi_2(\eta_2(s,u)) \rangle ds\\
         &<& \varphi(u)+ \int_{0}^{\epsilon}  -1 ds
         \leq \bar{\beta}-\epsilon,
\end{eqnarray*}
which is a contradiction.

Finally,
by Lemma \ref{5.vectorfield2} \textbf{(ii)}, it is easy to see
$\varphi(\eta_2(\epsilon,u)) \leq    \varphi(u)$ if $ u \in  \varphi^{\zeta+2}_{a-1}$ and
$  \eta_2(\epsilon, \varphi^{a-1})  \subset \varphi^{a-1}$.
The boundedness of $M$ follows directly from (\ref{deformationlemma2bound}).
Further more,  if $M$ is $\tau$-compact, then  $M\backslash\mathcal{N}$, $M\cap \bar{\mathcal{N}}$  and  $\eta_2(\epsilon,  M\backslash\mathcal{N}) $ are also $\tau$-compact since $\eta_2(\epsilon,\cdot)$ is $\tau$-continuous. Let $h(\cdot)=  \eta_2(\epsilon,\cdot) $,   the proof is complete.
\end{proof}

\section{Proof of Theorem \ref{5.main result}}
~

  This section is devoted to prove our Theorem \ref{5.main result}. For this purpose,    we introduce two  kinds of
 pseudo-$Z_{2}$ indexes.

For $A \in \tilde{\Sigma}$ (defined by (\ref{defofsigematil})), we define    $\gamma^*(A)$ as

\begin{equation*}
    \gamma^*(A):=\min\limits_{h \in \tilde{\mathcal{H}}(A)} \gamma(h(A)\cap S_r\cap X^+),\quad A \in \tilde{\Sigma},
\end{equation*}
where $\gamma$ is the genus (see (\ref{5.def of genus})), $\tilde{\mathcal{H}}(A)$ is defined by Definition \ref{5.def of H tild A}
 and $r$ is obtained in (\ref{5.r}). The following lemma gives some  properties of $\gamma^*$, its
 proof can be found in \cite{SK}, so we omit here.
\begin{lemma}\label{5.prop of genus}
Let $A,B \in \tilde{\Sigma}$.
\begin{description}
  \item[(i)] If $\gamma^*(A)\neq 0$, then $A\neq \emptyset$.
  \item[(ii)] If $A\subset B$, then $\gamma^*(A)\leq\gamma^*(B)$.
  \item[(iii)] If $h \in \tilde{\mathcal{H}}(A)$, then  $\gamma^*(h(A))\geq\gamma^*(A)$.
\end{description}
\end{lemma}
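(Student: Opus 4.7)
The plan is to reduce each of the three claims to simple manipulations with the class $\tilde{\mathcal{H}}$ together with the standard monotonicity of the Krasnoselskii genus $\gamma$. The common theme is that $\tilde{\mathcal{H}}$ is closed under restriction to subsets (for \textbf{(ii)}) and under composition (for \textbf{(iii)}), while $\gamma(B_1)\le\gamma(B_2)$ whenever $B_1\subset B_2$, and the identity belongs to every $\tilde{\mathcal{H}}(A)$ so the class is never empty.

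For \textbf{(i)}, I argue by contrapositive: if $A=\emptyset$ then the only map in $\tilde{\mathcal{H}}(A)$ has empty image, so $h(A)\cap S_r\cap X^+=\emptyset$ and $\gamma(\emptyset)=0$ by (\ref{5.def of genus}); thus $\gamma^*(A)=0$. For \textbf{(ii)}, given $A\subset B$ with both in $\tilde{\Sigma}$, every $h\in\tilde{\mathcal{H}}(B)$ restricts to $h|_A\in\tilde{\mathcal{H}}(A)$ because each of the conditions in Definition \ref{5.def of H tild A} is inherited by restriction. Since $h|_A(A)=h(A)\subset h(B)$, monotonicity of the genus gives
$$\gamma^*(A)\le\gamma(h(A)\cap S_r\cap X^+)\le\gamma(h(B)\cap S_r\cap X^+),$$
and taking the infimum over $h\in\tilde{\mathcal{H}}(B)$ yields the claim. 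For \textbf{(iii)}, I first check that $h(A)\in\tilde{\Sigma}$ for $h\in\tilde{\mathcal{H}}(A)$: symmetry follows from oddness, closedness from $h$ being a homeomorphism in the original topology, boundedness from the bounded-to-bounded property, and $\tau$-compactness from the $\tau$-continuity of admissible maps. Then for any $g\in\tilde{\mathcal{H}}(h(A))$ the composition $g\circ h$ lies in $\tilde{\mathcal{H}}(A)$, so
$$\gamma(g(h(A))\cap S_r\cap X^+)=\gamma((g\circ h)(A)\cap S_r\cap X^+)\ge\gamma^*(A),$$
and passing to the infimum over $g$ gives $\gamma^*(h(A))\ge\gamma^*(A)$.

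The single step that demands genuine care is verifying $g\circ h\in\tilde{\mathcal{H}}(A)$ in \textbf{(iii)}. The odd-homeomorphism, bounded-to-bounded and $\tau$-continuity properties pass through composition immediately. The finite-dimensional admissibility condition follows by writing $v-(g\circ h)(v)=(v-h(v))+(h(v)-g(h(v)))$ and invoking $\tau$-continuity of $h$ so that, on a sufficiently small $\tau$-neighborhood $W_u$ of $u$, the image $h(W_u\cap A)$ sits in a $\tau$-neighborhood of $h(u)$ on which $g$ has its own finite-dimensional decomposition; the sum of the two finite-dimensional subspaces controls $v-(g\circ h)(v)$. The level inequalities must be checked by splitting on the threshold $a-1$: if $u\in A\cap\varphi_{a-1}$ then $\varphi((g\circ h)(u))\le\varphi(u)$ holds whether $h(u)$ lands above or below $a-1$, while if $u\in A\cap\varphi^{a-1}$ the hypothesis on $h$ forces $h(u)\in h(A)\cap\varphi^{a-1}$, hence $\varphi(g(h(u)))\le a-1$. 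This threshold case-analysis is the main obstacle, but it is ultimately bookkeeping rather than a conceptual difficulty, consistent with the authors' remark that the proof is basic.
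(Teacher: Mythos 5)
Your proof is correct, and since the paper omits the argument as ``basic,'' your writeup is exactly the natural one: contrapositive for \textbf{(i)}, restriction of maps plus genus monotonicity for \textbf{(ii)}, and closure of $\tilde{\mathcal{H}}$ under composition for \textbf{(iii)}, mirroring Remark~\ref{5.remark3.1} for $\mathcal{H}$. Your case analysis at the threshold $a-1$ for the composition $g\circ h$ and the check that $h(A)\in\tilde{\Sigma}$ (via $\tau$-continuity and the fact that $\tau$-closed sets are norm-closed) are precisely the two small points that need verification, and you handle both correctly.
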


\begin{remark}
There are sets of arbitrarily large pseudoindex, $\gamma^*$, in $\bar{\Sigma}$. For any $k\in\mathbb{Z}^+$,
let $X_{k}:=\overline{(\bigoplus_{j=1}^{k}\mathbb{R}f_{j})\oplus X^-}$, where $\{f_{j} \}_{j\geq 1}$ is
an  orthonormal  basis of  $X^+$.
Then, by Lemma \ref{5.lemma2.1}\textbf{(ii)},
 there exists $R_k>r$ such that

$$\sup\limits_{u\in X_k,\|u\|=R_k} \varphi(u)   <\inf\limits_{\|u\|\leq r} \varphi(u).$$
Take

$$A:= \{u\in X_k : \|u\|\leq R_k \}.$$
Clearly, $A$ is bounded and $\tau$-compact.
Then, noting Lemma \rm{\ref{5.lemma2.1}\textbf{(ii)}} and arguing exactly as Lemma 4.5 of \cite{Bartsch-Ding} (see also Proposition 7 of \cite{Rup}), we know that

 $$\gamma^*(A)\geq k.$$
\end{remark}
Furthermore, we  need another  pseudo-$Z_{2}$ index defined on $ \tilde{ \Sigma}_H$ with

\begin{equation*}
     \tilde{ \Sigma}_H:= \{ A \in \tilde{ \Sigma} : A \subset H\},
\end{equation*}
where   $H\subset X$ is a fixed set.
For any $ A \in \tilde{ \Sigma}_H$, we define

\begin{equation*}\label{5.def of pseudoindex2}
    \gamma_H^*(A):=\min\limits_{h \in \tilde{\mathcal{H}}_H} \gamma(h(A)\cap S_r\cap X^+),
\end{equation*}
where $\tilde{\mathcal{H}}_H$ is defined by Definition \ref{5.def of H tild H}.
Here are some basic properties of $\gamma_{H}^*$:

\begin{lemma}\label{5.prop of new genus}
Let  $A, B \in \tilde{\Sigma}_H$, then
\begin{description}
  \item[(i)]  If $\gamma_H^*(A)\neq 0$ then $A\neq \emptyset$.
  \item[(ii)] $\gamma_H^*(A) \geq \gamma^*(A)$.
  \item[(iii)] If $A\subset B$ then $\gamma_H^*(A)\leq\gamma_H^*(B)$.
  \item[(iv)] If $h \in \tilde{\mathcal{H}}_H$ and  $h(A)\subset H $ then  $\gamma_H^*(h(A))\geq\gamma_H^*(A)$.
  \item[(v)] $\gamma_H^*(A\cup B)\leq\gamma_H^*(A) + \gamma(B)$.
\end{description}
\end{lemma}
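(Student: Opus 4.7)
The plan is to derive each item from three elementary observations: (a) $\tilde{\mathcal{H}}_H$ contains the identity on $H$; (b) restricting any $h \in \tilde{\mathcal{H}}_H$ to a set $A \subset H$ produces an element of $\tilde{\mathcal{H}}(A)$ in the sense of Definition \ref{5.def of H tild A}, as remarked right after that definition; and (c) the composition $g \circ h$ of two elements of $\tilde{\mathcal{H}}_H$ is well-defined on $A$ whenever $h(A) \subset H$.

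Parts \textbf{(i)}, \textbf{(ii)}, \textbf{(iii)} are then direct. For \textbf{(i)}, testing $\gamma_H^*(\emptyset)$ with the identity produces $\gamma(\emptyset)=0$, and the contrapositive yields the claim. For \textbf{(ii)}, every $h \in \tilde{\mathcal{H}}_H$ restricts to $h|_A \in \tilde{\mathcal{H}}(A)$ with the same image $h|_A(A) = h(A)$, so the family of images used to define $\gamma_H^*(A)$ is a subfamily of the one used to define $\gamma^*(A)$; the minimum over the smaller family is therefore at least the minimum over the larger family, giving $\gamma^*(A) \leq \gamma_H^*(A)$. For \textbf{(iii)}, $A \subset B$ yields $h(A)\bigcap S_r \bigcap X^+ \subset h(B)\bigcap S_r \bigcap X^+$ for every $h$, and monotonicity of $\gamma$ followed by minimization over $h \in \tilde{\mathcal{H}}_H$ produces $\gamma_H^*(A) \leq \gamma_H^*(B)$.

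For \textbf{(v)}, the key point is that each $h \in \tilde{\mathcal{H}}_H$ is an odd homeomorphism, so $\gamma(h(B)) = \gamma(B)$, and therefore $\gamma(h(B)\bigcap S_r \bigcap X^+) \leq \gamma(h(B)) = \gamma(B)$. Combining this with the standard subadditivity of the Krasnoselskii genus applied to
\begin{equation*}
h(A\cup B)\bigcap S_r \bigcap X^+ = \bigl(h(A)\bigcap S_r \bigcap X^+\bigr) \cup \bigl(h(B)\bigcap S_r \bigcap X^+\bigr),
\end{equation*}
one obtains $\gamma(h(A\cup B)\bigcap S_r \bigcap X^+) \leq \gamma(h(A)\bigcap S_r \bigcap X^+) + \gamma(B)$, and taking the minimum over $h \in \tilde{\mathcal{H}}_H$ proves (v) because the term $\gamma(B)$ is independent of $h$.

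The main effort is in \textbf{(iv)}. Given any $g \in \tilde{\mathcal{H}}_H$ used to test $\gamma_H^*(h(A))$, the composition $g\circ h$ is well-defined on $A$ because $h(A) \subset H$ lies in the domain of $g$. On $A$, $g\circ h$ inherits oddness, admissibility and the homeomorphism property from $g$ and $h$, while the energy-descent condition follows from the chain $\varphi(g(h(u))) \leq \varphi(h(u)) \leq \varphi(u)$ on $A\bigcap \varphi_{a-1}$, and analogously on $A\bigcap \varphi^{a-1}$. The delicate point is then to realize $g\circ h$ as the restriction to $A$ of some element of $\tilde{\mathcal{H}}_H$ (rather than merely of $\tilde{\mathcal{H}}(A)$), paralleling the closure under composition noted in Remark \ref{5.remark3.1} for $\mathcal{H}(A)$; once this realization is in hand,
\begin{equation*}
\gamma_H^*(A) \leq \gamma\bigl( g(h(A)) \bigcap S_r \bigcap X^+ \bigr),
\end{equation*}
and taking the infimum over $g \in \tilde{\mathcal{H}}_H$ delivers $\gamma_H^*(A) \leq \gamma_H^*(h(A))$.
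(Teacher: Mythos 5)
Your treatment of (i), (ii), (iii) and (v) follows exactly the paper's route (the paper only writes out (ii) and (v), asserting that (i), (iii), (iv) ``follow directly from the properties of genus''): (ii) is the restriction observation $h|_A\in\tilde{\mathcal{H}}(A)$, and (v) is subadditivity of genus plus $\gamma(h|_B(B))=\gamma(B)$ because $h|_B$ is an odd homeomorphism. Your reading of (iv), however, is actually more careful than the paper's. You are right that producing $(g\circ h)|_A\in\tilde{\mathcal{H}}(A)$ is not enough: both $\gamma_H^*(h(A))$ and $\gamma_H^*(A)$ minimize over $\tilde{\mathcal{H}}_H$, so one needs $g\circ h$ to be the restriction to $A$ of a map defined on all of $H$ and lying in $\tilde{\mathcal{H}}_H$. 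Since the hypothesis $h(A)\subset H$ does not by itself give $h(H)\subset H$, this is a genuine gap in the lemma as abstractly stated, and stopping at ``once this realization is in hand'' leaves (iv) unfinished. Note that the fallback argument you could run with only $\tilde{\mathcal{H}}(A)$ would yield $\gamma(g(h(A))\bigcap S_r\bigcap X^+)\geq\gamma^*(A)$, which by your own (ii) is the \emph{wrong} side of $\gamma_H^*(A)$.

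The gap is, however, closeable in the only situation the paper ever uses the lemma. There $H=\varphi^{\zeta+1}$, a sublevel set, and the third bullet of Definition \ref{5.def of H tild H} forces $h(H)\subset H$ automatically for every $h\in\tilde{\mathcal{H}}_H$: for $u\in H\bigcap\varphi_{a-1}$ one has $\varphi(h(u))\leq\varphi(u)\leq\zeta+1$, and for $u\in H\bigcap\varphi^{a-1}$ one has $\varphi(h(u))\leq a-1\leq\zeta+1$ (using $a\leq 0\leq\zeta$), so in either case $h(u)\in\varphi^{\zeta+1}=H$. Then $g\circ h:H\rightarrow X$ is globally defined, and the verification that $g\circ h\in\tilde{\mathcal{H}}_H$ is routine: oddness, the homeomorphism and bounded-image properties pass through composition, admissibility is preserved exactly as in Remark \ref{5.remark3.1}, and the energy-descent bullet is precisely the two-case chain you already wrote. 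With this observation supplied your proof of (iv) is complete. If one wants the lemma to stand for an arbitrary fixed $H$, the clean fix is to add $h(H)\subset H$ as a hypothesis in (iv).
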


\begin{proof}
$\mathbf{(i)}$, $\mathbf{(iii)}$ and $\mathbf{(iv)}$ follow directly from the properties of genus,
here we only give a simple proof of $\mathbf{(ii)}$  and $\mathbf{(v)}$.
Firstly, if  $h \in \tilde{\mathcal{H}}_H$, then $h|_{A} \in \tilde{ \mathcal{H}} (A)$, so $\gamma_H^*(A) \geq \gamma^*(A)$,
thus  $\mathbf{(ii)}$ is proved.
Then for any $h \in \tilde{\mathcal{H}}_H$, by the  subadditivity of genus  we have

\begin{equation*}
    \gamma_H^*(A\cup B)\leq  \gamma(h(A\cup B)\cap S_r\cap X^+)\leq    \gamma(h(A)\cap S_r\cap X^+)+\gamma(h|_{B}(B)).
\end{equation*}
Since $h|_{B}: B\rightarrow h|_{B}(B) $ is a homeomorphism,  we know that $\gamma(h|_{B}(B))=\gamma(B)$, and

\begin{equation*}
   \gamma_H^*(A\cup B)\leq  \gamma(h(A)\cap S_r\cap X^+)+\gamma(B),
\end{equation*}
the proof of $\mathbf{(v)}$ is complete.
\end{proof}

Before giving the proof of Theorem \ref{5.main result}, we define a mini-max value through $\gamma^*$:

$$c_k:= \inf\limits_{\gamma^*(A)\geq k} \sup\limits_{u \in A} \varphi(u),~ A \in \tilde{\Sigma},$$
then $c_k$  is  well-defined  for each $k\geq 1$. Moreover,

\begin{equation*}
    b\leq c_k\leq c_{k+1}, \text{ for any integer }  k\geq 1,
\end{equation*}
where $b$ is defined by (\ref{5.r}).
Now we are ready to the prove Theorem \ref{5.main result}.

\begin{proof}[\textbf{Proof of Theorem \ref{5.main result}}]
By contradiction, if $\mathcal{F}$ (defined by (\ref{defofhuaF})) is finite, i.e., (\ref{5.equation}) has
only finite many geometrically distinct solutions,   there  are two possibilities for $c_k$:

\begin{description}
  \item[Case (I)] There is an integer $k\geq 1$ such that $c_k > \zeta+1$ ($\zeta$ is given in (\ref{5.3.2})), or,
  \item[Case (II)] $b\leq c_k \leq \zeta+1$ for all $k\geq1$.
\end{description}
In what follows, we show that both cases \textbf{(I)} and \textbf{(II)} are impossible by using the first and second deformation lemmas.
If case \textbf{(I)} holds, that is,  there exists an integer $k\geq 1$ such that

\begin{equation*}\label{5.ck fanwei}
    c_k > \zeta+1,
\end{equation*}
then, there exists  $M\in \tilde{\Sigma}$ with $\gamma^*(M)\geq k$
and

\begin{equation*}
  \zeta+1< \sup\limits_{u\in M} \varphi(u)<c_k+\frac{1}{2}.
\end{equation*}
Hence, by the first deformation  Lemma \ref{5.first deformation lemma}, there exists $h \in \mathcal{H}(M)$, thus $h \in \tilde{\mathcal{H}}(M)$, such that

\begin{equation*}
   \sup\limits_{u\in M} \varphi(h(u))<c_k-\frac{1}{2}.
\end{equation*}
However, by  Lemma \ref{5.prop of genus}\textbf{(iii)} we have $\gamma^*(h(M)) \geq  \gamma^*(M) \geq k$, so,

 $$c_k\leq  \sup\limits_{u \in h(M)} \varphi(u)<c_k-\frac{1}{2},$$
which  is a contradiction, so, case \textbf{(I)} is false.

If  case \textbf{(II)} occurs, then
\begin{equation}\label{5.ck shangquejie}
  b \leq  \bar{c}:= \lim\limits_{k\rightarrow\infty} c_k\leq \zeta+1.
\end{equation}
Fix $H= \varphi^{\zeta+2}$ with $\zeta$ given by \ref{5.3.2} and define

\begin{equation*}
     \tilde{c}_k:= \inf\limits_{\gamma_H^*(A)\geq k} \sup\limits_{u \in A} \varphi(u), \text{ for } A \in \tilde{\Sigma}_H.
\end{equation*}
By Lemma \ref{5.prop of new genus} $\mathbf{(ii)}$  we have

\begin{equation*}
    \tilde{c}_k  \leq   c_k,
\end{equation*}
so, $\tilde{c}_k $ is also bounded from above, then for any integer    $ k\geq 1$ we have

\begin{equation*}
    b \leq \tilde{c}_k \leq \tilde{c}_{k+1} \leq  \tilde{c}:= \lim\limits_{k\rightarrow\infty} \tilde{c}_k\leq  \bar{c} \leq \zeta+1,
\end{equation*}
where $\bar{c}$ is defined by (\ref{5.ck shangquejie}).

Let $\epsilon \in (0,1)$ be   given in Lemma \ref{5.second deformation lemma}, then there exist $k_1\in \mathbf{N}$ satisfying

\begin{equation}\label{5.c tild used to contradiction}
    \tilde{c}-\frac{\epsilon}{4}  < \tilde{c}_{k_1}\leq \tilde{c},
\end{equation}
and   a set  $M_1 \in \tilde{\Sigma}_H$  with $\gamma_H^*(M_1)\geq k_1$  such that

\begin{equation*}
     \tilde{c}-\frac{\epsilon}{4}  < \tilde{c}_{k_1}\leq \sup\limits_{u\in M_1} \varphi(u)<\tilde{c}+\frac{\epsilon}{4}<\zeta+2.
\end{equation*}
On the other hand, for any $M \in \tilde{\Sigma}_H$, let $\mathcal{N}$ be  a  $\tau$-open set
 with $\gamma(\mathcal{\bar{N}})=1$, by Lemma \ref{5.prop of new genus}\textbf{(iii)}\textbf{(v)},  we have

\begin{eqnarray*}
  \gamma_H^*(M) &=& \gamma_H^*\big(   (M\backslash \mathcal{N}) \cup (M\cap \bar{\mathcal{N}})   \big) \leq
\gamma_H^*(M\backslash \mathcal{N})  + \gamma(M\cap \bar{\mathcal{N}}) \\
   &\leq&   \gamma_H^*(M\backslash \mathcal{N})  + \gamma(\bar{\mathcal{N}})
   =    \gamma_H^*(M\backslash \mathcal{N})  +1,
\end{eqnarray*}
i.e.,

\begin{equation}\label{5.genus estimate}
   \gamma_H^*(M\backslash \mathcal{N})\geq \gamma_H^*(M)-1\text{.}
\end{equation}
Now, we are going to deduce a contradiction whenever  $\gamma_H^*(M_1)$ is infinite or finite.

If $\gamma_H^*(M_1)=+\infty$, then $\gamma_H^*(M_1\backslash \mathcal{N})=+\infty$. By
Definition \ref{5.def of H tild H} with $H= \varphi^{\zeta+2}$ and     the second deformation Lemma \ref{5.second deformation lemma},
there exists  $h \in \tilde{\mathcal{H}}_H$ such that

\begin{equation*}
   \sup\limits_{u\in M_1\backslash \mathcal{N}} \varphi(h(u))< \tilde{c}+\frac{\epsilon}{4}-\epsilon=\tilde{c}-\frac{3\epsilon}{4}.
\end{equation*}
Lemma \ref{5.prop of new genus}\textbf{(iv)} implies that  $\gamma_H^*(h(M_1\backslash \mathcal{N}))\geq \gamma_H^*(M_1\backslash \mathcal{N})=+\infty$, hence

\begin{equation*}
    \tilde{c}_{k_1} \leq  \sup\limits_{u \in h( M_1\backslash \mathcal{N})} \varphi(u)< \tilde{c} -\frac{3\epsilon}{4},
\end{equation*}
which   contradicts with (\ref{5.c tild used to contradiction}).

If $\gamma_H^*(M_1)= \Gamma <+\infty$ with $\Gamma\geq k_1$, we have

\begin{equation*}
     \tilde{c}-\frac{\epsilon}{4}  < \tilde{c}_{k_1}\leq \tilde{c}_{\Gamma+1}\leq \tilde{c},
\end{equation*}
then there exists an $M_2 \in \tilde{\Sigma}_H$  with   $\gamma_H^*(M_2)\geq \Gamma+1$    such that

\begin{equation*}
     \tilde{c}-\frac{\epsilon}{4}  < \tilde{c}_{\Gamma+1}\leq \sup\limits_{u\in M_2} \varphi(u)<\tilde{c}+\frac{\epsilon}{4}.
\end{equation*}

By Lemma \ref{5.second deformation lemma},
there exists $h \in \tilde{\mathcal{H}}_H$ such that

\begin{equation*}
   \sup\limits_{u\in M_2\backslash \mathcal{N}} \varphi(h(u))<\tilde{c}+\frac{\epsilon}{4}-\epsilon=\tilde{c}-\frac{3\epsilon}{4}.
\end{equation*}
On the other hand, by (\ref{5.genus estimate}) we have

\begin{equation*}
 \gamma_H^*(h(M_2\backslash \mathcal{N}))\geq \gamma_H^*(M_2\backslash \mathcal{N})\geq \gamma_H^*(M_2)-1\geq \Gamma \geq k_1.
\end{equation*}
So,

\begin{equation*}
    \tilde{c}_{k_1} \leq  \sup\limits_{u \in h( M_2\backslash \mathcal{N})} \varphi(u)<\tilde{c}-\frac{3\epsilon}{4},
\end{equation*}
which also   contradicts with (\ref{5.c tild used to contradiction}).  The proof is complete.
\end{proof}



 \vskip6mm

 {}


\begin{thebibliography}{}



\bibitem{Ackermann}  N. Ackermann, On a periodic Schr$\ddot{\text{o}}$dinger equation with nonlocal superlinear part, Math. Z. 248
           (2004), 423-443.



\bibitem{AlamaLi1} S. Alama, Y. Y. Li, Existence of solutions for semilinear elliptic equations with indefinite linear part,  J. Differential Equations  96(1992), 89-115.



\bibitem{AlamaLi2} S. Alama, Y. Y. Li, On "Multibump" bound states for certain semilinear elliptic equations, Indiana Univ. Math. J. 41(1992), 983-1026.



 \bibitem{BartschDingNon-Metrizable}   T. Bartsch, Y.H. Ding, Deformation theorems on non-metrizable vector spaces and applications to critical point theory, Math. Nachr. 279(2006), 1267-1288.



 \bibitem{Bartsch-Ding}   T. Bartsch, Y.H. Ding, On a nonlinear Schr$\ddot{\text{o}}$dinger equation with periodic potential, Math. Ann. 313
(1999), 15-37.





\bibitem{BC2} C. J. Batkam, F. Colin, Generalized fountain theorem and applications
to strongly indefinite semilinear problems, J. Math. Anal. Appl. 405(2013), 438-452.


\bibitem{Bernini-Bieganowski} F. Bernini, B. Bieganowski, Generalized linking-type theorem with applications to strongly indefinite problems
with sign-changing nonlinearities, Calc. Var. Partial Differential Equations 61(2022), 182.



  \bibitem{BieganowskiSurvey} B. Bieganowski, \text{S}chr$\ddot{\text{o}}$dinger-type equations with sign-changing nonlinearities: a survey,
preprint, arXiv:1810.01754.



  \bibitem{Bieganowski} B. Bieganowski, Solutions of the fractional \text{S}chr$\ddot{\text{o}}$dinger equation with a sign-changing
      nonlinearity, J. Math. Anal. Appl. 450(2017), 461-479.



  \bibitem{Bieganowski-Mederski} B. Bieganowski, J. Mederski, Nonlinear \text{S}chr$\ddot{\text{o}}$dinger equations with sum of periodic and vanishing
         potentials and sign-changing nonlinearities, Commun. Pure Appl. Anal. 17(2018), 143-161.





  \bibitem{Chabrowski} J. Chabrowski, A. Szulkin, On a semilinear Schr$\ddot{\text{o}}$dinger equation with critical Sobolev exponent,
Proc. Amer. Math. Soc. 130 (2002), 85-93.




  \bibitem{Chen-Tang} S. Chen, X. Tang, On the planar Schr$\ddot{\text{o}}$dinger equation with
     indefinite linear part and critical growth nonlinearity, Calc. Var. Partial Differential Equations 60(2021), 95.


\bibitem{CH} S. Chen, C. Wang, An infite-dimensional linking theorem without upper semi-continuous
assumption and its applications, J. Math. Anal. Appl. 420(2014), 1552-1567.



\bibitem{Chen-Chquard} S. Chen, L. Xiao, Existence of a nontrivial solution for a strongly indefinite periodic \text{C}hoquard system, Calc. Var. Partial Differential Equations 54(2015), 599-614.



\bibitem{cos} D. G. Costa, H. Tehrani, Existence and multiplicity results for a class of Schr$\ddot{\text{o}}$dinger equations with indefinite nonlinearities, Adv. Differential Equations 8(2003), 1319-1340.



\bibitem{V-Rabinowitz1} V. Coti Zelati, P. H. Rabinowitz, Homoclinic orbits for second order Hamiltonian systems possessing superquadratic potentials, J. Amer. Math. Soc.  4(1991), 693-727.



\bibitem{V-Rabinowitz2} V. Coti Zelati, P. H. Rabinowitz, Homoclinic type solutions for a semilinear elliptic PDE on $R^n$, Comm. Pure Appl. Math. 45(1992), 1217-1269.





\bibitem{Ding} Y. H. Ding,  Variational Methods   for Strongly Indefinite Problems, World Scientific, 2007.




 \bibitem{Ding-Lee}    Y. H. Ding, C. Lee,  Multiple solutions of Schr$\ddot{\text{o}}$dinger equations with indefinite linear part and super or asymptotically linear terms,  J. Differential Equations  222(2006), 137-163.







  \bibitem{OUR} L. J. Gu, H. S. Zhou, An improved fountain theorem and its application, Adv. Nonlinear Stud. 17(2017), 727-738.


  \bibitem{Hanqing}     Q. Han , F. Lin, Elliptic Partial Differential Equations,  Amer.  Math. Soc.,  New York, 2011.


\bibitem{SK} W. Kryszewski,  A. Szulkin, Generalized linking theorem with an application to semilinear Schr$\ddot{\text{o}}$dinger equation, Adv. Differential Equations 3(1998), 441-472.


\bibitem{Kuchment} P. Kuchment, The mathematics of photonic crystals, Mathem. Modeling in Opt. Sci.,
 207-272, SIAM, 2001.

\bibitem{Ligongbao}  G. Li ,  A. Szulkin, An asymptotically periodic \text{S}chr$\ddot{\text{o}}$dinger equation with indefinite linear part,
 Commun. Contemp. Math. 4 (2002), 763-776.





\bibitem{FLiu-JianfuYang}  F. Liu, J. Yang, Nontrivial solutions of \text{S}chr$\ddot{\text{o}}$dinger equations with indefinite nonlinearities,
 J. Math. Anal. Appl. 334 (2007), 627-645.





\bibitem{SI1} S. Liu, Z. Shen, Generalized saddle point theorem and asymptotically linear problems with periodic potential, Nonlinear Anal: Theory, Methods \& Applications, 86(2013), 52-57.



\bibitem{Mederski1}  J. Mederski, Ground states of a system of nonlinear \text{S}chr$\ddot{\text{o}}$dinger equations with periodic potentials,
    Comm. Partial Differential Equations 41(2016), 1426-1440.


\bibitem{Mederski2}  J. Mederski, Solutions to a nonlinear \text{S}chr$\ddot{\text{o}}$dinger equations with periodic potentials and zero on the boundary of
  the spectrum, Topol. Methods Nonlinear Anal. 46(2015), 755-771.


\bibitem{DLMills} D. L. Mills, Nonlinear Optics: Basic Concepts, Springer, 2012.


\bibitem{NieW} W. Nie, Optical nonlinearity: phenomena, applications, and materials, Adv. Mater. 5(1993), 520-545.





\bibitem{Pankov} A. Pankov, Periodic nonlinear \text{S}chr$\ddot{\text{o}}$dinger equation with application to photonic crystals, Milan J. Math. 73(2005),
  259-287.



\bibitem{TangxianhuaJDE} D. Qin,  V. R$\check{a}$dulescu,  X. Tang, Ground states and geometrically distinct solutions for periodic \text{C}hoquard-\text{P}ekar equations, J. Differential Equations 275(2021), 652-683.






   \bibitem{Minimax-methods} P. H. Rabinowitz,  Minimax Methods in Critical Point Theory with Applications to Differential Equations,
                 No. 65, Amer.  Math. Soc., 1986.


\bibitem{Reed-Simon}  M. Reed, B. Simon,  Methods of Modern Mathematical Physics, Vol. IV,
Academic Press, New York, 1975.


\bibitem{Rup} H. J. Ruppen, A generalized min-max theorem for functionals of strongly indefinite sign, Calc. Var. Partial Differential Equations 50(2014), 231-255.



\bibitem{weak-linking} M. Schechter, W. M. Zou, Weak linking theorems and Schr$\ddot{\text{o}}$dinger equations with critical Sobolev exponent, ESAIM  Control Optim. Calc. Var. 9(2003),  601-619.

\bibitem{SzulkinWeth} A. Szulkin, T. Weth,   Ground state solutions for some indefinite variational problems, J. Funct. Anal. 257(2009), 3802-3822.


\bibitem{MW} M. Willem, Minimax Theorems, Birkhauser, Boston, 1996.










 \end{thebibliography}
\end{document}